
\documentclass[12pt]{article}%
\usepackage{amsmath, amsthm}
\usepackage{amscd}
\usepackage{graphicx}
\usepackage{amssymb}%
\usepackage{amsmath}%
\setcounter{MaxMatrixCols}{30}%
\usepackage{amsfonts}
\providecommand{\U}[1]{\protect\rule{.1in}{.1in}}
\textwidth 12cm     
\textheight 20cm
\newtheoremstyle{theorem}
{10pt}		
{10pt}
{\sl}
{\parindent}
{\bf}
{. }
{ }
{}
\providecommand{\U}[1]{\protect\rule{.1in}{.1in}}
\textwidth 12cm
\textheight 20cm
\providecommand{\U}[1]{\protect\rule{.1in}{.1in}}
\newtheorem{theorem}{Theorem}

\newtheorem{corollary}[theorem]{Corollary}

\newtheorem{definition}[theorem]{Definition}
\newtheorem{example}[theorem]{Example}

\newtheorem{lemma}[theorem]{Lemma}

\newtheorem{proposition}[theorem]{Proposition}

\begin{document}

\title{Linear Independence of a Finite Set of Dilations by a One-Parameter Matrix Lie Group}
\author{David Ferrone$^{1}$ and Vignon Oussa$^{2}$\\$^{1}$Dept.\ of Mathematics\\University of Connecticut\\Storrs-Mansfield, CT 06269 U.S.A.\\david.ferrone@uconn.edu\\[2pt] $^{2}$Dept.\ of Mathematics \& Computer Science\\Bridgewater State University\\Bridgewater, MA 02325 U.S.A.\\vignon.oussa@bridgew.edu}
\date{}
\maketitle

\begin{abstract}
Let $G=\{e^{tA}:t\in\mathbb{R}\}$ be a closed one-parameter subgroup of the
general linear group of matrices of order $n$ acting on $\mathbb{R}^{n}$ by
matrix-vector multiplication. We assume that all eigenvalues of $A$ are
rationally related. We study conditions for which the set $\left\{  f\left(
e^{t_{1}A}\cdot\right)  ,\cdots,f\left(  e^{t_{m}A}\cdot\right)  \right\}  $
is linearly dependent in $L^{p}\left(
\mathbb{R}
^{n}\right)  $ with $1\leq p<\infty.$

\textbf{AMS Subject Classification:} 39B52

\textbf{Key Words:} orbit, one-parameter, groups, cross-sections, dilation equation

\end{abstract}

\section{Introduction}

In their fundamental paper \cite{Ros}, Edgar and Rosenblatt studied difference
equations over locally compact abelian groups. They define a homogeneous
linear difference equation with constant coefficients over $%
\mathbb{R}
^{n}$ to be an equation of the type $\sum_{k=1}^{m}c_{k}f\left(
x+t_{j}\right)  =0$ which holds for all $x\in%
\mathbb{R}
^{n}$ where $c_{1},\cdots,c_{n}$ are nonzero complex scalars, $t_{1}%
,\cdots,t_{n}$ are distinct elements in $%
\mathbb{R}
^{n},$ and $f$ is a complex-valued function defined on $%
\mathbb{R}
^{n}.$ A function has linearly dependent translates when it is a solution to
the difference equation. Edgar and Rosenblatt proved in \cite{Ros} that in
$L^{p}\left(
\mathbb{R}
\right)  ,$ $1\leq p\leq2$ such an equation does not have a non-trivial
solution. In other words, any finite set of translates of a non-zero function
is linearly independent. However, for more general Euclidean spaces,
surprisingly, if $p>2n/\left(  n-1\right)  ,$ it is possible to find
non-trivial solutions for the difference equation. Very recently, during a
discussion related to Edgar and Rosenblatt's paper, Speegle asked the
following question. If the translations in the homogeneous linear difference
equation are replaced by dilations do the same results hold? Inspired by his
question, we consider equations of the type $\sum_{k=1}^{m}c_{k}f\left(
M_{k}x\right)  =0,$ where $\{M_{1},\cdots,M_{m}\}$ is a finite set of
non-singular matrices which are elements of a one-parameter dilation group.
More precisely, let $A$ be a matrix of order $n,$ and let $e^{A}=\sum
_{k=0}^{\infty}\frac{A^{k}}{k!}$ be an $n\times n$ non-singular matrix.
Throughout the paper, we assume that all eigenvalues of $A$ contained in $i%
\mathbb{R}
$ are rationally related. That is, if $\left\{  i\beta_{1},\cdots,i\beta
_{s}\right\}  $ is the set of all eigenvalues of $A$ in $i%
\mathbb{R}
^{\ast},$ then $\dim_{%
\mathbb{Q}
}\left\{  i\beta_{1},\cdots,i\beta_{s}\right\}  =1.$ Let $G=\left\{
e^{tA}:t\in%
\mathbb{R}
\right\}  $ be a one-parameter analytic closed subgroup of the general linear
group $GL\left(  n,%
\mathbb{R}
\right)  $. Thus $G$ is a matrix Lie group. Given $v\in%
\mathbb{R}
^{n},$ the group $G$ acts on $%
\mathbb{R}
^{n}$ as follows. $e^{tA}\cdot v=e^{tA}v$ for $v\in%
\mathbb{R}
^{n}.$ In fact, this action is just the left action given by matrix
multiplications. We are interested in the following basic question. Given a
matrix $A,$ when is true that
\begin{equation}%
{\displaystyle\sum\limits_{k=1}^{m}}
c_{k}\phi\left(  e^{t_{k}A}\cdot\right)  =0\label{dep}%
\end{equation}
has a non-trivial solution in $L^{p}\left(
\mathbb{R}
^{n}\right)  $ with $1\leq p<\infty?$ In other words, for which function
$\phi$ is the set $\left\{  \phi\left(  e^{t_{1}A}\cdot\right)  ,\cdots
,\phi\left(  e^{t_{m}A}\cdot\right)  \right\}  $ linearly dependent in
$L^{p}\left(
\mathbb{R}
^{n}\right)  $ for $1\leq p<\infty$? The unknowns in this equation are the
vectors $\left[  c_{1},c_{2},\cdots,c_{m}\right]  ,$ $\left[  t_{1}%
,t_{2},\cdots,t_{m}\right]  \in%
\mathbb{R}
^{m},$ and the function $\phi.$ We assume that $t_{1},t_{2},\cdots,t_{m}$ are
distinct real numbers.

Let \textrm{supp}$\left(  \phi\right)  $ be the support of $\phi\in
L^{p}\left(
\mathbb{R}
^{n}\right)  .$ Let $K\left(
\mathbb{R}
\right)  $ be the set of all measurable functions of compact support on the
real line, and let $C_{0}\left(
\mathbb{R}
\right)  $ be the set of all continuous functions vanishing at infinity. We
show that there exists an open dense, co-null (with respect to the Lebesgue
measure) subset of $%
\mathbb{R}
^{n}$ which is $G$-invariant, and only contains locally compact $G$-orbits of
maximal dimension$.$ We call such subset $\Omega$. For each $v\in$
\textrm{supp}$\left(  \phi\right)  \cap\Omega,$ there exists a smooth
structure on the $G$-orbit of $v$ such that the map $\varphi_{v}:Gv\rightarrow%
\mathbb{R}
$ defined by $\varphi_{v}\left(  e^{tA}v\right)  =t$ is a diffeomorphism. Let%
\[
\Lambda_{\phi}=\left\{  v\in\mathrm{supp}\left(  \phi\right)  \cap\Omega
:\phi_{v}\circ\varphi_{v}^{-1}\in C_{0}\left(
\mathbb{R}
\right)  \text{ or }L^{p}\left(
\mathbb{R}
\right)  \text{ or }K\left(
\mathbb{R}
\right)  \right\}  .
\]
and let $\mathbf{m}$ be the canonical Lebesgue measure on $%
\mathbb{R}
^{n}.$ We summarize our main results in the following theorem.

\begin{theorem}
\label{Theorem1}If $G$ is not the identity group, $A$ is traceless, and $G$
acts freely in $\Omega$ then the equation $\sum_{k=1}^{m}c_{k}\phi\left(
e^{t_{k}A}\cdot\right)  =0$ has no non-trivial solution in $L^{p}\left(
\mathbb{R}
^{n}\right)  .$ If there exists some $v\in%
\mathbb{R}
^{n}$ such that the isotropy group of $v$ is isomorphic to a lattice subgroup
of $%
\mathbb{R}
$ then there is a non-trivial solution for $\sum_{k=1}^{m}c_{k}\phi\left(
e^{t_{k}A}\cdot\right)  =0.$ Suppose that $G$ is not the identity group, and
$G$ acts freely in $\Omega.$ Let $\phi\in L^{p}\left(
\mathbb{R}
^{n}\right)  -0$. If $\left\{  \phi\left(  e^{t_{1}A}\cdot\right)
,\cdots,\phi\left(  e^{t_{m}A}\cdot\right)  \right\}  $ is linearly dependent
then $\mathbf{m}\left(  \Lambda_{\phi}\right)  =0$ and $m>2.$
\end{theorem}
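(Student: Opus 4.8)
The plan is to reduce the $n$-dimensional dependence relation to a one-parameter difference equation read off along each $G$-orbit, and then to feed this into a one-dimensional linear-independence statement. First I would foliate $\Omega$ by orbits. Since $G$ acts freely on $\Omega$ with locally compact one-dimensional orbits and $\varphi_v(e^{tA}v)=t$ is a diffeomorphism of $Gv$ onto $\mathbb{R}$, there is a measurable cross-section $\Sigma$ so that $(\sigma,t)\mapsto e^{tA}\sigma$ parametrizes $\Omega$, and Lebesgue measure disintegrates as $d\mathbf{m}=J(\sigma,t)\,dt\,d\mu(\sigma)$ for a transverse measure $\mu$ and a positive density $J$. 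Here $J$ is genuinely non-constant because $A$ need not be traceless; in fact $J(\sigma,t)$ carries the factor $e^{t\,\mathrm{tr}\,A}$ coming from $\det e^{tA}$. Writing $g_{\sigma}(t)=\phi(e^{tA}\sigma)=\phi_{\sigma}\circ\varphi_{\sigma}^{-1}(t)$ and substituting $x=e^{tA}\sigma$ into the hypothesized relation $\sum_{k=1}^{m}c_{k}\phi(e^{t_{k}A}x)=0$ gives, because $e^{t_{k}A}e^{tA}\sigma=e^{(t+t_{k})A}\sigma$,
\[
\sum_{k=1}^{m}c_{k}\,g_{\sigma}(t+t_{k})=0.
\]
By Fubini together with the disintegration, this holds for a.e. $t\in\mathbb{R}$ for $\mu$-almost every $\sigma$.

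Now suppose, toward the main claim, that $v=\sigma\in\Lambda_{\phi}$ lies over such a good $\sigma$. Then $g_{\sigma}$ belongs to one of $C_{0}(\mathbb{R})$, $L^{p}(\mathbb{R})$, $K(\mathbb{R})$ and solves the constant-coefficient difference equation above with distinct shifts $t_{k}$. The one-dimensional linear-independence lemma then forces $g_{\sigma}\equiv 0$: taking Fourier transforms turns the equation into $P(\xi)\widehat{g_{\sigma}}(\xi)=0$ with symbol $P(\xi)=\sum_{k}c_{k}e^{it_{k}\xi}$, which is entire of exponential type and not identically zero, hence has a discrete zero set; thus $\widehat{g_{\sigma}}$ is supported on a discrete set and $g_{\sigma}$ is (at most) an exponential sum whose real frequencies lie in that zero set, and no nonzero such function lies in $C_{0}(\mathbb{R})$ or in $L^{p}(\mathbb{R})$ with $p<\infty$; the $K(\mathbb{R})$ case is even more direct, by inspecting the extreme point of the support. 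Consequently $\phi=0$ a.e. on the orbit $G\sigma$, so $\{\phi\neq 0\}$ meets $G\sigma$ in a $J\,dt$-null set. Since $\Lambda_{\phi}\subseteq\mathrm{supp}(\phi)\cap\Omega$ is fibered over such orbits, integrating this vanishing against $d\mu$ and discarding the $\mu$-null set of bad $\sigma$ yields $\mathbf{m}(\Lambda_{\phi})=0$.

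For the bound $m>2$, I would rule out $m=1,2$ directly. If $m=1$ then $c_{1}\neq 0$ forces $\phi=0$. If $m=2$, after the substitution $y=e^{t_{1}A}x$ the relation becomes $\phi(e^{sA}y)=\lambda\phi(y)$ with $s=t_{2}-t_{1}\neq 0$ and $\lambda=-c_{1}/c_{2}$, that is, $g_{\sigma}(t+s)=\lambda g_{\sigma}(t)$ for a.e. $\sigma$. The general solution has the form $g_{\sigma}(t)=\lambda^{t/s}h_{\sigma}(t)$ with $h_{\sigma}$ being $s$-periodic; inserting this into the finiteness
\[
\int_{\mathbb{R}}\abs{g_{\sigma}(t)}^{p}J(\sigma,t)\,dt<\infty,
\]
valid for a.e. $\sigma$ because $\phi\in L^{p}(\mathbb{R}^{n})$, leaves an integrand of the shape $\abs{h_{\sigma}(t)}^{p}e^{\beta t}$ with $h_{\sigma}$ periodic, whose integral over $\mathbb{R}$ diverges unless $h_{\sigma}\equiv 0$. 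Hence $g_{\sigma}\equiv 0$ for a.e. $\sigma$, so $\phi=0$, contradicting $\phi\neq 0$. Therefore any nontrivial dependence needs $m\geq 3$.

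The hard part will be the measure-theoretic bookkeeping of the first two paragraphs. Because $A$ is not assumed traceless, the orbit density $J(\sigma,t)$ is a genuinely exponential weight, and one must check both that the pointwise relation descends to $\mu$-almost every orbit and that ``$\phi$ vanishes on $G\sigma$'' translates correctly into the ambient statement $\mathbf{m}(\Lambda_{\phi})=0$ via the disintegration. The second delicate point is the one-dimensional lemma for $C_{0}(\mathbb{R})$ and for $L^{p}(\mathbb{R})$ with $p>2$, where the naive Hausdorff--Young route fails; it is the entire-function argument for the symbol $P$, giving a discrete zero set uniformly in $p$, that makes the three cases go through at once.
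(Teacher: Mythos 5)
Your proposal establishes only the third of the theorem's three assertions. For that part, the route is essentially the paper's: the reduction along orbits to the difference equation $\sum_{k}c_{k}g_{\sigma}(t+t_{k})=0$ is Lemma \ref{restriction}, and the conclusion $\mathbf{m}(\Lambda_{\phi})=0$ is the contrapositive of Proposition \ref{freeaction}, except that the paper simply cites Edgar--Rosenblatt for the one-dimensional independence statement while you sketch a Fourier proof (for $p>2$ a tempered distribution supported on the discrete zero set of $P$ can be an infinite combination of point masses and their derivatives, so ruling out membership in $L^{p}$ takes more than you write; you rightly flag this as the delicate point). Your $m>2$ argument, writing $g_{\sigma}(t)=\lambda^{t/s}h_{\sigma}(t)$ with $h_{\sigma}$ periodic and observing that $\int_{\mathbb{R}}|h_{\sigma}(t)|^{p}e^{\beta t}\,dt$ diverges unless $h_{\sigma}=0$, is correct and is a somewhat more elementary alternative to the paper's Proposition \ref{two}, which instead uses a measurable cross-section for the discrete group $\Gamma=\{M^{k}:k\in\mathbb{Z}\}$ and the decomposition $\Vert\phi\Vert_{p}^{p}=\sum_{k}\Vert\phi\chi_{M^{-k}(\Sigma^{\circ})}\Vert_{p}^{p}$.

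The genuine gap is that the first two assertions are never addressed. For the first (traceless, free action $\Rightarrow$ no non-trivial solution) the missing idea is the converse implication $\mathrm{trace}(A)=0\Rightarrow\mathbf{m}(\Lambda_{\phi})>0$ for every nonzero $\phi\in L^{p}(\mathbb{R}^{n})$: since $|\det J_{\xi}(t,w)|=|\det e^{tA}|\,\mathbf{f}(w)=e^{t\,\mathrm{trace}(A)}\mathbf{f}(w)$ is independent of $t$ when $A$ is traceless, finiteness of $\Vert\phi\Vert_{p}^{p}=\int_{\Sigma}\mathbf{f}(w)\,\Vert\phi_{w}\circ\varphi_{w}^{-1}\Vert_{L^{p}(\mathbb{R})}^{p}\,dw$ forces $\phi_{w}\circ\varphi_{w}^{-1}\in L^{p}(\mathbb{R})$ for almost every $w$, so that $\Lambda_{\phi}$ has positive measure (this is Lemma \ref{Lemma1}). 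You have the disintegration in hand but explicitly set this case aside by remarking that $J$ is ``genuinely non-constant''; without it, the statement $\mathbf{m}(\Lambda_{\phi})=0$ does not yield non-existence of solutions. For the second assertion (some isotropy group a lattice $\Rightarrow$ a non-trivial solution exists) an explicit construction is required and nothing in your outline produces one; the paper takes $\phi=\chi_{D_{n}}$, the indicator of the unit ball, which is constant along the compact orbits and therefore satisfies the two-term relation $\phi(v)-\phi(e^{A}v)=0$.
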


If $\phi$ is non-trivial, a direct consequence of Theorem \ref{Theorem1} is
that, if $G$ acts freely a.e. and if $\left\{  \phi\left(  e^{t_{1}A}%
\cdot\right)  ,\cdots,\phi\left(  e^{t_{m}A}\cdot\right)  \right\} $ is
linearly dependent then $\phi$ is not continuous.

\section{Orbital Structure and Cross-sections}

Let us start by setting up some notation. Throughout the paper we write the
identity matrix of order $n$ as $1_{n\times n}.$ For the transpose of a vector
$v$ we write $v^{tr},$ and all sets are assumed to be measurable. Also, it is
worth to make the following remarks. First of all, the case where $p=\infty$
is not interesting. Also, suppose that Equation (\ref{dep}) has a non-trivial
solution, and $\phi$ is a solution such that $\phi\left(  0\right)  \neq0.$
Then it must be case that $\sum_{k=1}^{m}c_{k}=0.$ Secondly, let $\phi$ $\in
L^{1}\left(
\mathbb{R}
^{n}\right)  $ be a positive function. By integrating both sides of Equation
(\ref{dep}), and by performing some change of variables on each term of the
sum, it is easy to see that if $\phi$ is a solution of Equation (\ref{dep})
then $\sum_{k=1}^{m}c_{k}\left\vert \det\left(  e^{-t_{k}A}\right)
\right\vert =0.$ Also, if the vector $\left[  t_{1},t_{2},\cdots,t_{m}\right]
$ and a function $\phi$ are given, then there is a rather simple procedure
available to decide if $\left\{  \phi\left(  e^{t_{1}A}\cdot\right)
,\cdots,\phi\left(  e^{t_{m}A}\cdot\right)  \right\}  $ is linearly independent.

\begin{proposition}
If there exists $u_{1},u_{2},\cdots,u_{m}$ in the domain of the function
$\phi$ such that the matrix
\[
\left[  \phi\left(  e^{t_{i}}u_{j}\right)  \right]  _{1\leq i,j\leq m}%
\]
is a non-singular matrix of order $m$, then $\left\{  \phi\left(  e^{t_{1}%
A}\cdot\right)  ,\cdots,\phi\left(  e^{t_{m}A}\cdot\right)  \right\}  $ is
linearly independent.

\begin{proof}
Suppose there exists $u_{1},u_{2},\cdots,u_{m}$ in the domain of the function
$\phi$ such that the matrix $X=\left[  \phi\left(  e^{t_{i}}u_{j}\right)
\right]  _{1\leq i,j\leq m}$ is non-singular, and $\phi$ satisfies Equation
(\ref{dep}) for some nonzero vector $c=\left[  c_{1},c_{2},\cdots
,c_{m}\right]  ^{tr}.$ We can then say that Equation (\ref{dep}) is equivalent
to $Xc=\left[  0,0,\cdots,0\right]  ^{tr}$. As a result, $c$ is a non-trivial
element in the null-space of the matrix $X.$ That would be a contradiction
since $X$ is invertible.
\end{proof}

\end{proposition}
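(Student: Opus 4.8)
The plan is to argue by contradiction, converting the assumed functional dependence into a finite homogeneous linear system and then invoking the hypothesized invertibility. Suppose the set $\{\phi(e^{t_1 A}\cdot),\cdots,\phi(e^{t_m A}\cdot)\}$ were linearly dependent. Then there is a nonzero coefficient vector $c=[c_1,\cdots,c_m]^{tr}$ for which $\sum_{k=1}^m c_k\,\phi(e^{t_k A}x)=0$ holds for every $x$ in the domain of $\phi$. The one preliminary point to record is that a pointwise identity of this form may be tested against any fixed finite collection of admissible domain points with no measure-theoretic difficulty, since we are merely substituting particular values of $x$ into a relation that is assumed to hold for all of them; this is what lets the given sample points $u_1,\cdots,u_m$ enter the argument.

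First I would substitute $x=u_j$ for each $j=1,\cdots,m$ into the dependence relation, producing the $m$ scalar equations $\sum_{k=1}^m c_k\,\phi(e^{t_k A}u_j)=0$. Writing $X=[\phi(e^{t_i A}u_j)]_{1\le i,j\le m}$ for the matrix in the statement, these $m$ equations assert exactly that $c$ annihilates the columns of $X$; that is, the system is $X^{tr}c=0$ (equivalently $Xc=0$ after relabeling which index ranges over the parameters $t_i$ and which over the sample points $u_j$). In either reading, the coefficient matrix of the resulting linear system is $X$ up to transposition, so the next step is to locate $c$ in the kernel of that matrix.

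The conclusion is then immediate: since $X$ is non-singular by hypothesis, so is its transpose, and hence the system has only the trivial solution $c=0$. This contradicts the choice of $c$ as a non-trivial dependence vector, so no such dependence can exist and the dilates are linearly independent. I do not anticipate a genuine obstacle here, as the core is elementary linear algebra once the functional identity has been reduced to a finite system; the only place demanding a moment's care is the transpose bookkeeping, namely keeping straight whether the parameters $t_i$ index the rows and the sample points $u_j$ the columns. That accounting is harmless for the final step because invertibility of $X$ is equivalent to invertibility of $X^{tr}$, so the kernel is trivial regardless of the convention chosen.
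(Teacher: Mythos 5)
Your proposal is correct and follows essentially the same route as the paper: assume a non-trivial dependence vector $c$, evaluate the identity at the sample points $u_{1},\cdots,u_{m}$ to obtain the homogeneous system $Xc=0$ (up to transpose), and conclude $c=0$ from the non-singularity of $X$, a contradiction. The transpose bookkeeping you flag is harmless for exactly the reason you give, and the paper glosses over it entirely.
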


If all eigenvalues of $A$ are real, we may assume that $A$ is given in real
Jordan canonical form. Otherwise, we may still use a real Jordan canonical
form. However, it is much more convenient to use a complexification of $%
\mathbb{R}
^{n}$ and to use a matrix which is similar to $A,$ and is of the form $D+N$,
where $D$ is a diagonal matrix (with non real complex eigenvalues) and $N$ is
a proper lower diagonal nilpotent matrix . We explain how such form is
obtained. Let us suppose that the matrix $A$ given in real Jordan canonical
form has $p$ real eigenvalues, and $q$ non real complex eigenvalues. Notice
that $q$ is even. We identify $%
\mathbb{R}
^{n}$ with $%
\mathbb{R}
^{p}\times%
\mathbb{R}
^{q}$ such that $n=p+q.$ Next, we fix bases for $%
\mathbb{R}
^{p}\times%
\mathbb{R}
^{q}$ and $%
\mathbb{C}
^{p}\times%
\mathbb{C}
^{q}$ such that the topological embedding map
\[
\iota:%
\mathbb{R}
^{p}\times%
\mathbb{R}
^{q}\rightarrow%
\mathbb{C}
^{p}\times%
\mathbb{C}
^{q}%
\]
defined by
\begin{align}
&  \iota\left(  \left[  v_{1},\cdots,v_{p},w_{1},\cdots,w_{q}\right]
^{tr}\right)  \label{i}\\
&  =\left[  v_{1},\cdots,v_{p},w_{1}+iw_{2},\cdots,iw_{q}+w_{q-1},w_{1}%
-iw_{2},\cdots,iw_{q}-w_{q-1}\right]  ^{tr}\nonumber
\end{align}
induces the following commutative diagram:
\[
\begin{CD}
\mathbb{R}^n @>\iota>> \mathbb{C}^p\times\mathbb{C}^q\\
@VVe^AV @VV\iota\circ e^A\circ\iota^{-1}V\\
\mathbb{R}^n @>\iota>> \mathbb{C}^p\times\mathbb{C}^q
\end{CD}
\]
Given a matrix $A$ in real Jordan canonical form, we identify $A$ with
$\iota\circ A\circ\iota^{-1}$ and $%
\mathbb{R}
^{p}\times%
\mathbb{R}
^{q}$ with $\iota\left(
\mathbb{R}
^{p}\times%
\mathbb{R}
^{q}\right)  \subset%
\mathbb{C}
^{p}\times%
\mathbb{C}
^{q}.$ The matrix $e^{\iota\circ A\circ\iota^{-1}}$ is then given in block
form with both real and complex generalized eigenvalues (see Example
\ref{example rotations}).

\begin{definition}
Let $\mathcal{X}$ $\ $be a $G$-invariant subset of $%
\mathbb{R}
^{n}.$ A Borel set $C\subset\mathcal{X}$ is called a \textbf{cross-section}
for the action of $G$ in $\mathcal{X}$ if

\begin{enumerate}
\item $%
{\displaystyle\bigcup\limits_{t\in\mathbb{R}}}
e^{tA}C=\mathcal{X}.$

\item $e^{t_{1}A}C\cap e^{t_{2}A}C=\emptyset$ whenever $t_{1}\neq t_{2}.$
\end{enumerate}
\end{definition}

\begin{definition}
The \textbf{isotropy} \textbf{group} of $v$ is defined as
\[
G_{v}=\left\{  e^{tA}\in G:e^{tA}v=v\right\}  .
\]

\end{definition}

\begin{definition}
The action of $G$ on a $G$-invariant subset of $%
\mathbb{R}
^{n}$ denoted by $\mathcal{X}$ is said to be \textbf{free }in\textbf{
}$\mathcal{X}$\textbf{ }if for any $v$ in $\mathcal{X}$, $e^{tA}v=e^{sA}v$
implies $t=s$. Equivalently, if there exists a vector $v$ in $\mathcal{X}$
such that $e^{tA}v=v$ then $t=0.$
\end{definition}

\begin{lemma}
\label{lattice}If $A$ is diagonalizable with eigenvalues in $i%
\mathbb{R}
,$ there is a $G$-invariant open dense and co-null subset of $%
\mathbb{R}
^{n},$ denoted $\Omega$ which only contains orbits of maximal dimension such
that the map $v\mapsto G_{v}$ is constant on $\Omega.$
\end{lemma}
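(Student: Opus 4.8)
The plan is to reduce everything to an explicit normal form and then read off the orbits and isotropy groups by hand. Since $A$ is a real diagonalizable matrix whose eigenvalues all lie in $i\mathbb{R}$, there is a real basis in which $A$ is block diagonal: one $2\times2$ infinitesimal-rotation block (diagonal entries $0$, off-diagonal entries $-\beta_j$ and $\beta_j$) for each conjugate pair $\pm i\beta_j$ of nonzero imaginary eigenvalues, repeated according to multiplicity, together with a zero block of size $\dim\ker A$ coming from the eigenvalue $0$. First I would record the corresponding form of the group elements: $e^{tA}$ is block diagonal, acting on the $j$-th coordinate plane $P_j\cong\mathbb{R}^2$ as the planar rotation $R(\beta_j t)$ through angle $\beta_j t$, and acting as the identity on the fixed subspace $\ker A$.

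Next I would use the standing hypothesis $\dim_{\mathbb{Q}}\{\beta_1,\dots,\beta_s\}=1$. Writing $\beta_j=n_j\beta$ with $n_j\in\mathbb{Z}$ and, after absorbing the common factor into $\beta$, $\gcd(n_1,\dots,n_s)=1$, the map $t\mapsto e^{tA}$ becomes periodic: $e^{tA}=1_{n\times n}$ iff every $n_j\beta t\in 2\pi\mathbb{Z}$, which by coprimality of the $n_j$ holds exactly when $t\in T\mathbb{Z}$ with $T=2\pi/\beta$. This reconfirms that $G$ is closed and identifies it with the circle $\mathbb{R}/T\mathbb{Z}$; in particular $\dim G=1$ whenever $A\neq 0$, so $1$ is the maximal possible orbit dimension.

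I would then set
\[
\Omega=\{v\in\mathbb{R}^n : \pi_j(v)\neq 0 \text{ for every } j=1,\dots,s\},
\]
where $\pi_j$ is the orthogonal projection onto $P_j$. Its complement is the finite union $\bigcup_{j=1}^{s}\{\pi_j=0\}$ of linear subspaces of codimension $2$, so $\Omega$ is open, dense, and co-null for $\mathbf{m}$; it is $G$-invariant because each $e^{tA}$ maps $P_j$ to itself and preserves the condition $\pi_j(v)\neq 0$. Since no $v\in\Omega$ is a fixed point, each orbit in $\Omega$ has dimension $1$, the maximal value, which settles the maximal-dimension clause.

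The crux, and the step I expect to be the main obstacle, is showing $v\mapsto G_v$ is constant on $\Omega$. For $v\in\Omega$ we have $e^{tA}v=v$ iff $R(\beta_j t)\pi_j(v)=\pi_j(v)$ for all $j$; as every $\pi_j(v)\neq 0$, a planar rotation fixes it only when its angle lies in $2\pi\mathbb{Z}$, so the isotropy condition is $n_j\beta t\in 2\pi\mathbb{Z}$ for all $j$ — precisely the condition characterizing $e^{tA}=1_{n\times n}$. Hence $G_v=\{1_{n\times n}\}$ for every $v\in\Omega$, so the isotropy map is constant and $G$ in fact acts freely on $\Omega$. The delicate point is arranging the isotropy group to be the same group at every $v$: this forces the requirement that \emph{all} the $\pi_j(v)$ be nonzero, since if some components vanished the surviving angle conditions would determine a larger cyclic isotropy group whose order depends on which $\pi_j(v)$ survive, and hence would vary with $v$. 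It is exactly the rational-relatedness (coprimality) hypothesis that lets one conclude, on $\Omega$, that simultaneously requiring all angles $n_j\beta t$ in $2\pi\mathbb{Z}$ collapses the solution set to $t\in T\mathbb{Z}$, keeping the isotropy constant.
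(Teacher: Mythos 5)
Your construction of $\Omega$ and your computation of the stabilizer condition are essentially the paper's own argument, transposed from the complexified eigenbasis into real $2\times 2$ rotation blocks; that difference is immaterial. The problem is your final step. Having correctly shown that for $v\in\Omega$ the set of parameters $t$ with $e^{tA}v=v$ is exactly $\{t: n_j\beta t\in 2\pi\mathbb{Z}\ \forall j\}=T\mathbb{Z}$, you identify $G_v$ with the single matrix $1_{n\times n}$ and declare the action free. This is the opposite of what the lemma is for. The paper's notion of isotropy and of a free action is stated in terms of the parameter: the action is free iff $e^{tA}v=e^{sA}v$ implies $t=s$, equivalently $e^{tA}v=v$ forces $t=0$. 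Here $e^{TA}v=v$ with $T=2\pi/\beta\neq 0$, so the action is \emph{not} free; the paper records $G_v$ as the lattice $\{e^{tA}:t\in T\mathbb{Z}\}\cong T\mathbb{Z}$, constant on $\Omega$, and this is exactly what is meant by ``the isotropy group of $v$ is isomorphic to a lattice subgroup of $\mathbb{R}$'' in Theorem \ref{Theorem1}.

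The misidentification is not a harmless convention: this lemma feeds the branch of the paper in which the orbits in $\Omega$ are circles and the dilation equation \emph{does} admit non-trivial solutions (e.g.\ the indicator function of a $G$-invariant compact set satisfies $\phi(v)-\phi(e^{A}v)=0$, since $A$ diagonalizable with spectrum in $i\mathbb{R}$ is traceless). If one accepted your conclusion that $G$ acts freely on $\Omega$, then Proposition \ref{two} and the traceless-case proposition would apply and yield that no non-trivial solution exists --- contradicting that explicit example and the second assertion of Theorem \ref{Theorem1}. So the part of the statement you actually need --- that $v\mapsto G_v$ is the \emph{same non-trivial lattice} for every $v\in\Omega$ --- is present in your computation (the conditions $n_j\beta t\in 2\pi\mathbb{Z}$ do not depend on $v$ once all $\pi_j(v)\neq 0$), but you should state the constant value as $T\mathbb{Z}$ rather than collapsing it to the trivial group, and you should delete the claim of freeness.
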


\begin{proof}
For the case where $G$ is the identity group, the proof is trivial. For the
second part, suppose that $G$ is not equal to the identity group. We decompose
$%
\mathbb{C}
^{p}\times%
\mathbb{C}
^{q}$ into direct sums of eigensubspaces such that $%
\mathbb{C}
^{p}\times%
\mathbb{C}
^{q}$ is identified with
\[%
\mathbb{C}
^{p}\oplus\left(
{\displaystyle\bigoplus\limits_{j=1}^{q/2}}
V_{j}\right)  \oplus\left(
{\displaystyle\bigoplus\limits_{j=1}^{q/2}}
\overline{V_{j}}\right)
\]
and the restriction of $\iota\circ A\circ\iota^{-1}$ onto $%
\mathbb{C}
^{p}$ is a zero matrix, the restriction of $\iota\circ A\circ\iota^{-1}$ onto
$V_{k}$ is equal to a matrix $B_{k},$ and the restriction of $\iota\circ
A\circ\iota^{-1}$ onto $\overline{V_{k}}$ is equal to $\overline{B}_{k},$
where $B_{k}$ is a diagonal matrix. Moreover, $\left(  B_{k}\right)
_{j,j}=i\beta_{k},$ and $\overline{B}_{k}$ is a diagonal matrix such that
$\left(  \overline{B}_{k}\right)  _{j,j}=-i\beta_{k}$. Furthermore,
$i\beta_{k}\neq0$ for $1\leq k\leq q/2$ and $i\beta_{j}\neq i\beta_{l}$ for
$j\neq l.$ We recall that it is assumed that $\dim_{%
\mathbb{Q}
}\left\{  i\beta_{1},\cdots,i\beta_{q/2}\right\}  =1.$ Now let $\pi_{k}$ be
the projection of $%
\mathbb{C}
^{p}\times%
\mathbb{C}
^{q}$ onto $V_{k}.$ We define $\Omega=\iota^{-1}\left(  \left\{  v\in
V:\left(  \pi_{k}\left(  v\right)  \right)  _{1}\neq0,1\leq k\leq q/2\right\}
\right)  .$ Clearly, given $v\in\Omega,$ the isotropy group of $v$ is obtained
by finding all $e^{tA}\in G,t\in%
\mathbb{R}
$ such that $t\beta_{1}=2\pi k_{1},\cdots,t\beta_{q/2}=2\pi k_{q/2}$ where
$k_{1},k_{2},\cdots,k_{q/2}\in%
\mathbb{Z}
.$ Finally
\[
G_{v}=\{e^{2\pi rkA}:r=\mathrm{lcm}\left(  \beta_{1},\cdots\beta_{k}\right)
,k\in\mathbb{Z}\}
\]
which is a lattice subgroup of $G.$
\end{proof}

We will show how to construct cross-sections which are different from the
cross-sections given in \cite{Speegle}. The reason why we seek to obtain a
different construction is because we would like to assert that if $G$ acts
freely on at least one vector, then there is a smooth cross-section for the
action of $G$ in some open, dense, co-null, and $G$-invariant subset of $%
\mathbb{R}
^{n}.$ Also, the cross-sections obtained here will allow us to make some
precise computations later in the paper. The procedure used in this paper is
very similar to a standard technique which has been extensively used by Arnal,
Currey, and Dali in \cite{Currey} to compute cross-sections for the coadjoint
action of a class of solvable Lie groups.

Let $\iota$ be as defined in (\ref{i}) and identify $%
\mathbb{R}
^{n}$ with $%
\mathbb{R}
^{p}\times%
\mathbb{R}
^{q}.$

\begin{lemma}
\label{cross-section} Suppose that $G$ is not equal to the identity group, and
that $G$ acts freely a.e. There exists a $G$-invariant, open, dense and
co-null subset of $\iota\left(
\mathbb{R}
^{p}\times%
\mathbb{R}
^{q}\right)  ,$ denoted $\Omega_{%
\mathbb{C}
}$ and a function $\mathbf{F}$ defined on $\Omega_{%
\mathbb{C}
}$ such that the cross-section of the $G$-action in $\Omega_{%
\mathbb{C}
}$ is
\[
\Sigma_{%
\mathbb{C}
}=\left\{  v\in\Omega_{%
\mathbb{C}
}:\mathbf{F}\left(  v\right)  =0\right\}  .
\]
Moreover, $\Omega=\iota^{-1}\left(  \Omega_{%
\mathbb{C}
}\right)  $ is a $G$-invariant open dense subset of $%
\mathbb{R}
^{p}\times%
\mathbb{R}
^{q}$ and a cross-section $\Sigma$ for the action of $G$ in $\Omega$ is an
$\left(  n-1\right)  $-dimensional manifold given by $\Sigma=\iota^{-1}\left(
\Sigma_{%
\mathbb{C}
}\right)  .$
\end{lemma}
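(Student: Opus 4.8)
The plan is to produce a single real-valued function $\mathbf{F}$ whose restriction to each orbit is affine in the orbit parameter with nonzero slope; such a function makes every cross-section axiom immediate and forces its zero set to be a regular hypersurface. Concretely, I would build $\mathbf{F}$ on a $G$-invariant open set so that $\mathbf{F}(e^{tA}v)=ct+\mathbf{F}(v)$ for some constant $c\neq 0$ depending only on the chosen data. Granting this, for each $v$ in the domain the equation $ct+\mathbf{F}(v)=0$ has the unique solution $t=-\mathbf{F}(v)/c$; the existence of a solution gives the covering axiom and its uniqueness gives the disjointness axiom, so $\Sigma_{\mathbb{C}}=\mathbf{F}^{-1}(0)$ is a cross-section, and the existence of an affine-slope $\mathbf{F}$ also re-proves freeness on the domain, since $e^{t_0A}v=v$ would force $ct_0=0$.

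To construct $\mathbf{F}$ I would pass to the complexified coordinates in which $A$ is identified with $D+N$, with $D$ diagonal, $N$ lower nilpotent, and $[D,N]=0$, so that $e^{tA}=e^{tD}e^{tN}$ and each coordinate evolves as $e^{t\lambda_j}$ times a polynomial in $t$. There are two mechanisms. First, if some eigenvalue $\lambda_{j_0}=a+ib$ has $a\neq 0$, let $z$ be the coordinate at the top of its Jordan block; then $z(e^{tA}v)=e^{t\lambda_{j_0}}z(v)$, so on $\Omega_{\mathbb{C}}=\{z\neq 0\}$ the function $\mathbf{F}(v)=\log\abs{z(v)}$ satisfies $\mathbf{F}(e^{tA}v)=at+\mathbf{F}(v)$. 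Second, if instead $N\neq 0$, choose a Jordan chain of length at least two with top coordinate $z_1$ and successor $z_2$; then $z_1(e^{tA}v)=e^{t\lambda}z_1(v)$ and $z_2(e^{tA}v)=e^{t\lambda}(tz_1(v)+z_2(v))$, so on $\Omega_{\mathbb{C}}=\{z_1\neq 0\}$ the function $\mathbf{F}(v)=\operatorname{Re}(z_2(v)/z_1(v))$ satisfies $\mathbf{F}(e^{tA}v)=t+\mathbf{F}(v)$. In either case $\Omega_{\mathbb{C}}$ is the complement of a proper coordinate subspace, hence open, dense and co-null, and it is $G$-invariant because the vanishing of the chosen top coordinate is preserved by $e^{tA}$; moreover $\mathbf{F}$ descends to a smooth real-valued function on $\iota(\mathbb{R}^p\times\mathbb{R}^q)$ because the complexified coordinates occur in conjugate pairs.

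The hypotheses enter only to guarantee that at least one mechanism is available. I would argue the contrapositive: if neither applied, then every nonzero eigenvalue would be purely imaginary and $N$ would vanish, so $A$ would be diagonalizable with spectrum in $i\mathbb{R}$; since $G$ is not the identity there is a nonzero $i\beta$, and by Lemma \ref{lattice} the isotropy group would then be a nontrivial lattice on the co-null set $\Omega$, contradicting the assumption that $G$ acts freely almost everywhere. Finally, for the manifold statement I would set $f=\mathbf{F}\circ\iota$ on $\Omega=\iota^{-1}(\Omega_{\mathbb{C}})$ and observe that $df_w(Aw)=\left.\frac{d}{dt}\right|_{t=0}f(e^{tA}w)=c\neq 0$ for every $w\in\Omega$; hence $df$ never vanishes, $0$ is a regular value, and the regular value theorem makes $\Sigma=\iota^{-1}(\Sigma_{\mathbb{C}})=f^{-1}(0)$ an embedded $(n-1)$-dimensional submanifold.

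I expect the main obstacle to be the bookkeeping that ties the two mechanisms cleanly to the hypotheses: precisely isolating the excluded purely-imaginary diagonalizable case and verifying, through the conjugate-pairing built into $\iota$, that $\log\abs{z}$ and $\operatorname{Re}(z_2/z_1)$ are genuinely real-valued and smooth on the real slice rather than merely on the complexification. Once the affine-slope function is in hand, the cross-section axioms and the regular-value argument are routine.
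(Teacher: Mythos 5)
Your proposal is correct and follows essentially the same route as the paper: both pass to the complexified Jordan coordinates and construct $\mathbf{F}$ by cases according to whether some eigenvalue has nonzero real part or the nilpotent part $N$ is nontrivial, with the remaining (diagonalizable, purely imaginary spectrum) possibility excluded via Lemma \ref{lattice} and the freeness hypothesis. Your functions $\log\left\vert z\right\vert$ and $\operatorname{Re}\left(  z_{2}/z_{1}\right)$ are monotone reparametrizations of the paper's $\left\vert v_{1}\right\vert -1,$ $v_{\mathbf{k}},$ and $\operatorname{Re}\left(  v_{\mathbf{k}}\overline{v_{\mathbf{k}-1}}\right)$, with the added benefit that the exactly-affine normalization along orbits merges the paper's three cases into two and makes the regular-value argument for the $\left(  n-1\right)$-manifold claim immediate, a point the paper asserts without detail.
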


\begin{proof}
Let $v\in%
\mathbb{R}
^{n}.$ Let us suppose that $\lambda_{1},\cdots,\lambda_{n}\in%
\mathbb{C}
$ are the diagonal entries for the matrix $\iota\circ A\circ\iota^{-1}$ which
is given in the form described in (\ref{i}). If the spectrum of $A$ is real,
we let $V=%
\mathbb{R}
^{n}$ and in that case the map $\iota$ is just the identity map. Otherwise, we
may assume that $%
\mathbb{R}
^{n}$ is identified with $V=\iota\left(
\mathbb{R}
^{p}\times%
\mathbb{R}
^{q}\right)  $ as shown in (\ref{i}). Let $A=D+N,$ where $D$ is a diagonal
matrix with possibly complex entries, and $N$ is a proper lower triangular
nilpotent matrix. Computing the action of $e^{tA},$ we have%
\[
e^{tA}\left[
\begin{array}
[c]{c}%
v_{1}\\
v_{2}\\
\vdots\\
v_{n}%
\end{array}
\right]  =\left[
\begin{array}
[c]{c}%
\xi_{1}\left(  t,v\right) \\
\xi_{2}\left(  t,v\right) \\
\vdots\\
\xi_{n}\left(  t,v\right)
\end{array}
\right]  =\left[
\begin{array}
[c]{c}%
e^{t\lambda_{1}}\left(  v_{1}+p_{1}\left(  t,v\right)  \right) \\
e^{t\lambda_{1}}\left(  v_{2}+p_{2}\left(  t,v\right)  \right) \\
\vdots\\
e^{t\lambda n}\left(  v_{n}+p_{n}\left(  t,v\right)  \right)
\end{array}
\right]  ,
\]
where for each $k,$ $1\leq k\leq n$ the function $p_{k}$ is a homogeneous
polynomial of degree at most $k-1$ with $p_{1}=0$. We proceed to describe how
to obtain a cross-section for the action of $G$ in some open dense,
$G$-invariant, co-null subset of $%
\mathbb{R}
^{n}$ which we will describe precisely in this proof. Let
\[
\iota\circ A\circ\iota^{-1}=\left[
\begin{array}
[c]{ccc}%
B_{1} &  & \\
& \ddots & \\
&  & B_{s}%
\end{array}
\right]  ,B_{k}=\left[
\begin{array}
[c]{ccc}%
\lambda_{B_{k}} &  & \\
\ast & \ddots & \\
& \ast & \lambda_{B_{k}}%
\end{array}
\right]  ,\lambda_{B_{k}}\in%
\mathbb{C}
.
\]
There are three main cases to consider. For the first case, let us assume that
there is a block $B_{j}$ such that the non-zero entry shown below is located
at the $\mathbf{k}$-th row of matrix $\iota\circ A\circ\iota^{-1}.$ Without
loss of generality, we may assume that $j=1.$
\begin{equation}
B_{1}=\left[
\begin{array}
[c]{cccc}%
0 &  &  & \\
& \ddots &  & \\
& 1 & 0 & \\
&  &  & \ddots
\end{array}
\right]
\begin{array}
[c]{c}%
\\
\leftarrow\mathbf{k}\text{-th row}\\
\\
\end{array}
. \label{B}%
\end{equation}
We define $\Omega_{%
\mathbb{C}
}=\left\{  v\in V:v_{\mathbf{k-1}}\neq0\right\}  .$ To construct a
cross-section, we solve the equation $v_{\mathbf{k}}+tv_{\mathbf{k-1}}=0$ for
$t,$ and we define the function $\mathbf{F}$ over $\Omega_{%
\mathbb{C}
}$ such that $\mathbf{F}\left(  v\right)  =v_{\mathbf{k}}.$ For the second
case, we assume that case $1$ does not hold, and that without loss of
generality the first block $B_{1}$ has for eigenvalues $\alpha_{1}+i\beta_{1}$
where $\alpha_{1}\neq0.$ In that case, we define $\Omega_{%
\mathbb{C}
}=\left\{  v\in V:v_{1}\neq0\right\}  $. In order to construct a
cross-section, we solve the equation $e^{t\left(  \alpha_{1}+i\beta
_{1}\right)  }v_{1}=\frac{v_{1}}{\left\vert v_{1}\right\vert },$ and we set
$\mathbf{F}\left(  v\right)  =\left\vert v_{1}\right\vert -1.$ For the last
case, we assume that case $2$ does not hold. In other words, the spectrum of
$A$ is contained in $i%
\mathbb{R}
^{\ast}$ and $A$ is not diagonalizable. Let $\beta_{1}\in i%
\mathbb{R}
^{\ast}.$ Without loss of generality we assume that
\[
B_{1}=\left[
\begin{array}
[c]{cccccc}%
\beta_{1} &  &  &  &  & \\
0 & \ddots &  &  &  & \\
& \ddots & \beta_{1} &  &  & \\
&  & 1 & \beta_{1} &  & \\
&  &  & \ast & \ddots & \\
&  &  &  & \ddots & \beta_{1}%
\end{array}
\right]
\begin{array}
[c]{c}%
\\
\\
\leftarrow\mathbf{k}\text{-th row}\\
\\
\\
\end{array}
.
\]
Let us also assume that the non-zero entry is located at the $\mathbf{k}$-th
row of $\iota\circ A\circ\iota^{-1}$ as shown above$.$ To construct a
cross-section, we solve the equation $\operatorname{Re}\left(  \left(
v_{\mathbf{k}}+tv_{\mathbf{k}-1}\right)  \overline{v_{\mathbf{k}-1}}\right)
=0$ for $t$ and we obtain $t=-\frac{\operatorname{Re}\left(  v_{\mathbf{k}%
}\overline{v_{\mathbf{k}-1}}\right)  }{\left\vert v_{\mathbf{k}-1}\right\vert
^{2}}.$ Now, let $t_{v}$ be the unique solution to each of the equation solved
in either case $1$, case $2$ or case $3$. The cross-section mapping
$\sigma:\Omega_{%
\mathbb{C}
}\rightarrow\Sigma_{%
\mathbb{C}
},$ is defined as $\sigma\left(  v\right)  =e^{\left(  t_{v}A\right)  }v.$ For
the third case, $\Omega_{%
\mathbb{C}
}=\left\{  v\in V:v_{\mathbf{k}-1}\neq0\right\}  ,$ and $\mathbf{F}\left(
v\right)  =\operatorname{Re}\left(  v_{\mathbf{k}}\overline{v_{\mathbf{k}-1}%
}\right)  .$ Finally, the cross-section for the action of $G$ in $\Omega_{%
\mathbb{C}
}$ is given by
\[
\Sigma_{%
\mathbb{C}
}=\left\{  v\in\Omega_{%
\mathbb{C}
}:\mathbf{F}\left(  v\right)  =0\right\}  .
\]
For the first case,
\[
\Sigma_{%
\mathbb{C}
}=\left\{  \left[  v_{1},\cdots,v_{\mathbf{k}-1},0,v_{\mathbf{k}+1}%
,\cdots,v_{n}\right]  ^{tr}\in V:v_{\mathbf{k}-1}\neq0\right\}  .
\]
For the second case, $\Sigma_{%
\mathbb{C}
}=\left\{  \left[  v_{1},\cdots,v_{n}\right]  ^{tr}\in V:v_{1}\neq0,\left\vert
v_{1}\right\vert =1\right\}  .$ Finally, for the third case,
\[
\Sigma_{%
\mathbb{C}
}=\left\{  \left[  v_{1},\cdots,v_{n}\right]  ^{tr}\in V:v_{\mathbf{k}-1}%
\neq0,\operatorname{Re}\left(  v_{\mathbf{k}}\overline{v_{\mathbf{k}-1}%
}\right)  =0\right\}  .
\]
To complete the proof we define $\Omega=\iota^{-1}\left(  \Omega_{%
\mathbb{C}
}\right)  ,$ $\Sigma=\iota^{-1}\left(  \Sigma_{%
\mathbb{C}
}\right)  .$
\end{proof}

\begin{example}
\label{example rotations}Let
\[
A=\left[
\begin{array}
[c]{cccc}%
0 & 1 & 0 & 0\\
-1 & 0 & 0 & 0\\
1 & 0 & 0 & 1\\
0 & 1 & -1 & 0
\end{array}
\right]  \text{ and }\iota\circ A\circ\iota^{-1}=\left[
\begin{array}
[c]{cccc}%
i & 0 & 0 & 0\\
1 & i & 0 & 0\\
0 & 0 & -i & 0\\
0 & 0 & 1 & -i
\end{array}
\right]
\]
where
\[
\iota\left(  \left[  v_{1},v_{2},v_{3},v_{4}\right]  ^{tr}\right)  =\left[
v_{1}+iv_{2},v_{3}+iv_{4},v_{1}-iv_{2},v_{3}-iv_{4}\right]  ^{tr}.
\]
We have $\Omega_{%
\mathbb{C}
}=\left\{  v\in\iota\left(
\mathbb{R}
^{4}\right)  :v_{1}+iv_{2}\neq0\right\}  $,
\[
\Sigma_{%
\mathbb{C}
}=\left\{
\begin{array}
[c]{c}%
\left[  v_{1}+iv_{2},v_{3}+iv_{4},v_{1}-iv_{2},v_{3}-iv_{4}\right]  ^{tr}\in%
\mathbb{C}
^{4}:\\
v_{1}+iv_{2}\neq0,v_{1}v_{3}+v_{2}v_{4}=0
\end{array}
\right\}  ,
\]
and $\Sigma=\iota^{-1}\left(  \Sigma_{%
\mathbb{C}
}\right)  .$
\end{example}

\begin{lemma}
Let $v\in\Omega.$ A $G$-orbit of $v$ is diffeomorphic to the real line, or the
circle or is equal to $v$.

\begin{proof}
Given $A$ in the form described above. $A=D+N$ where $D$ is a diagonal matrix
and $N$ is a nilpotent matrix. Thus, $e^{tA}=e^{tD}e^{tN}.$ If $N\neq0$ then
$G_{v}=\left\{  1_{n\times n}\right\}  .$ Now suppose that $N=0$ and $D\neq0.$
If there is an eigenvalue of $A$ which has a non-trivial real part, then
$G_{v}=\left\{  1_{n\times n}\right\}  .$ Otherwise, if all eigenvalues are
purely imaginary, and rationally related then $G_{v}$ is a lattice subgroup of
$G$ (see \ref{lattice})$.$ Thus $G_{v}=e^{r%
\mathbb{Z}
A},$ with $r\neq0.$ Finally, if both $N$ and $D$ are equal to zero then
$G_{v}=%
\mathbb{R}
$. There is a unique smooth structure on the $G$-orbit of $v,$ and with this
structure, the orbit $Gv$ is endowed with a transitive smooth action by $G.$
Furthermore the map $d:G/G_{v}\rightarrow Gv,$ with $d\left(  e^{tA}%
G_{v}\right)  =e^{tA}v$ is an equivariant diffeomorphism in the sense that $d$
is a diffeomorphism, and $d\left(  e^{sA}e^{tA}G_{v}\right)  =e^{sA}d\left(
e^{tA}G_{v}\right)  $. In fact, $Gv$ is a homogenous $G$-manifold. If
$G_{v}=\left\{  1_{n\times n}\right\}  $ then $Gv$ is diffeomorphic to the
real line $%
\mathbb{R}
$. If $G_{v}=G$ then $Gv$ is diffeomorphic to $\left\{  v\right\}  $ and if
$G_{v}$ is a lattice subgroup of $%
\mathbb{R}
$ then $Gv$ is diffeomorphic to a circle. This completes the proof.
\end{proof}

\end{lemma}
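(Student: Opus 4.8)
The plan is to reduce the classification of orbit types to the classification of the isotropy, and to read off the latter from the Jordan structure of $A$. For $v\in\Omega$ consider the orbit map $t\mapsto e^{tA}v$; it factors through the closed additive subgroup $H_{v}=\{t\in\mathbb{R}:e^{tA}v=v\}$, which is exactly the parameter set of the isotropy group $G_{v}$. By standard homogeneous-space theory the orbit $Gv$ carries a unique smooth structure for which $d:G/G_{v}\rightarrow Gv$, $d(e^{tA}G_{v})=e^{tA}v$, is an equivariant diffeomorphism, so the diffeomorphism type of $Gv$ is that of $\mathbb{R}/H_{v}$. Since every closed subgroup of $\mathbb{R}$ is either $\{0\}$, a lattice $r\mathbb{Z}$ with $r\neq0$, or all of $\mathbb{R}$, the quotient is diffeomorphic to $\mathbb{R}$, to $S^{1}$, or to a point, respectively. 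It therefore suffices to compute $H_{v}$ (equivalently $G_{v}$) in each case.

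To compute $G_{v}$ I would use the Jordan--Chevalley splitting $A=D+N$ with $D$ diagonal and $N$ nilpotent and commuting, so that $e^{tA}=e^{tD}e^{tN}$, and organize the analysis around the three cases of Lemma \ref{cross-section}. When $N\neq0$ I would show the action is free: from the explicit orbit formula $\xi_{k}(t,v)=e^{t\lambda_{k}}(v_{k}+p_{k}(t,v))$, the condition $e^{tA}v=v$ compared in the coordinate $\mathbf{k}$ isolated by the cross-section construction contains a genuine linear-in-$t$ contribution $t\,v_{\mathbf{k}-1}$, and since $v\in\Omega$ forces $v_{\mathbf{k}-1}\neq0$ this term cannot be cancelled for $t\neq0$; hence $G_{v}=\{1_{n\times n}\}$ and $Gv\cong\mathbb{R}$. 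When $N=0$ but some eigenvalue $\lambda$ has nonzero real part, the factor $|e^{t\lambda}|$ in the corresponding coordinate is strictly monotone in $t$, so again $e^{tA}v=v$ forces $t=0$ and $Gv\cong\mathbb{R}$. When $N=0$ and every eigenvalue is purely imaginary, I would invoke Lemma \ref{lattice}: the hypothesis $\dim_{\mathbb{Q}}\{i\beta_{1},\dots,i\beta_{q/2}\}=1$ produces a common period, so the simultaneous conditions $t\beta_{j}\in2\pi\mathbb{Z}$ are solved precisely by a lattice, giving $G_{v}=e^{r\mathbb{Z}A}$ with $r\neq0$ and $Gv\cong S^{1}$. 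Finally $A=0$ gives $G=\{1_{n\times n}\}$ and $Gv=\{v\}$.

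Combining the two parts delivers the three stated diffeomorphism types. I expect the main obstacle to be the freeness argument in the nilpotent case $N\neq0$: one must verify that the lowest-order surviving $t$-dependence in the orbit formula is genuinely nonzero, and this works out exactly because $\Omega$ was defined in Lemma \ref{cross-section} so as to keep the relevant coordinate $v_{\mathbf{k}-1}$ nonvanishing. The monotonicity argument in the real-part case and the lattice argument in the imaginary case are comparatively routine, following directly from the scalar behaviour of $t\mapsto e^{t\lambda}$ and from Lemma \ref{lattice}, respectively.
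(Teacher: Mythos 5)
Your proposal is correct and follows essentially the same route as the paper: the same splitting $A=D+N$ with $e^{tA}=e^{tD}e^{tN}$, the same three-way case analysis on $N$ and the eigenvalues (invoking Lemma \ref{lattice} in the purely imaginary case), and the same identification of the orbit with the homogeneous space $G/G_{v}$ via the equivariant diffeomorphism $d$. You merely supply a bit more justification than the paper does, namely the classification of closed subgroups of $\mathbb{R}$ and an explicit freeness check when $N\neq0$ using $v_{\mathbf{k}-1}\neq0$ on $\Omega$.
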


We will also need the following facts.

\begin{lemma}
Let $\phi:%
\mathbb{R}
\rightarrow%
\mathbb{C}
$ be a measurable function. If $\phi$ is a non-zero function which vanishes
outside of a compact set, then $\phi$ has linearly independent translates. If
$\phi\in C_{0}\left(
\mathbb{R}
\right)  ,$ or $L^{p}\left(
\mathbb{R}
\right)  $ for $p\in\left[  1,\infty\right)  $ then $\phi$ has linearly
independent translates.
\end{lemma}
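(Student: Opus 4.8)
The plan is to treat the two families of hypotheses by different mechanisms: the compactly supported case by an elementary edge-of-support argument, and the $C_0$ and $L^p$ cases by distributional Fourier analysis. Throughout I interpret ``linearly independent translates'' as the assertion that $\sum_{k=1}^m c_k\phi(\cdot+t_k)=0$ almost everywhere, with the $t_k$ distinct, forces $c_1=\cdots=c_m=0$; after discarding vanishing coefficients I may assume every $c_k\neq0$ and, relabeling, $t_1<\cdots<t_m$.

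First, suppose $\phi$ is measurable, not almost everywhere zero, and supported in a compact set. Let $\beta$ be the essential supremum of $\mathrm{supp}(\phi)$. Since $t_1$ is the smallest shift, the translate $\phi(\cdot+t_1)$ has the right-most support, with essential right endpoint $\beta-t_1>\beta-t_k$ for $k\geq2$. Hence on an interval just to the left of $\beta-t_1$ all terms with $k\geq2$ vanish, so the relation reduces to $c_1\phi(\cdot+t_1)=0$ there. Because $\beta$ is the essential supremum of the support, $\phi$ is nonzero on a set of positive measure arbitrarily close to $\beta$, forcing $c_1=0$, a contradiction. This argument needs no integrability and so covers $K(\mathbb{R})$.

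For $\phi\in C_0(\mathbb{R})$ or $\phi\in L^p(\mathbb{R})$ with $1\le p<\infty$, note that $\phi$ is a tempered distribution. Taking the Fourier transform of the relation gives $\widehat\phi\cdot P=0$ in $\mathcal S'(\mathbb R)$, where $P(\xi)=\sum_{k=1}^m c_k e^{2\pi i t_k\xi}$ is the restriction to $\mathbb R$ of a nonzero entire function, whence its zero set $Z$ is discrete. On $\mathbb R\setminus Z$ the function $1/P$ is smooth, so $\widehat\phi$ vanishes there and $\mathrm{supp}\,\widehat\phi\subseteq Z$. I then localize at each $\xi_0\in Z$: choosing $\chi\in C_c^\infty$ equal to $1$ near $\xi_0$ and supported away from the other (isolated) points of $Z$, the product $\chi\widehat\phi$ is supported at the single point $\xi_0$, hence equals a finite combination $\sum_{j=0}^J a_j\delta_{\xi_0}^{(j)}$. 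Its inverse Fourier transform is therefore $e^{2\pi i\xi_0 x}Q(x)$ for a polynomial $Q$, while it also equals $\check\chi*\phi$ with $\check\chi\in\mathcal S\subset L^1$. Since $L^1*C_0\subseteq C_0$ and $L^1*L^p\subseteq L^p$, this convolution lies in the same class as $\phi$; as $|e^{2\pi i\xi_0 x}Q(x)|=|Q(x)|$, a nonzero polynomial can belong neither to $C_0(\mathbb R)$ nor to $L^p(\mathbb R)$ for $p<\infty$, so $Q\equiv0$ and all $a_j=0$. Thus $\xi_0\notin\mathrm{supp}\,\widehat\phi$; as $\xi_0\in Z$ was arbitrary, $\widehat\phi=0$ and hence $\phi=0$, contradicting $\phi\neq0$.

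I expect the main obstacle to be the distributional bookkeeping in the third step rather than any single hard estimate: one must justify that $P$ has only isolated zeros (a nonvanishing entire function), that a distribution supported at a point is a finite linear combination of derivatives of the Dirac mass so that the local inverse transform is genuinely a polynomial, and that the multiplier and convolution operations are legitimate on $\mathcal S'$, on $C_0$, and on $L^p$. The remaining facts — that modulation and convolution preserve the relevant function classes, and that a nonzero polynomial escapes every $C_0$ and every $L^p$ with $p<\infty$ — are routine. As a consistency check, for $2<p<\infty$ one could alternatively mollify, replacing $\phi$ by $\phi*\psi$ with $\psi\in C_c^\infty$, which satisfies the same relation and lies in $C_0$, thereby reducing the $L^p$ statement to the $C_0$ statement.
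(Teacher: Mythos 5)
Your argument is correct, but it is worth noting that the paper does not actually prove this lemma at all: its ``proof'' is a one-line citation to Edgar--Rosenblatt (Corollary 2.11 and Theorem 1.4 of their paper), so any self-contained argument is necessarily a different route. Your two mechanisms are both sound. The edge-of-support argument for the compactly supported case is airtight: with $t_1<\cdots<t_m$ and all $c_k\neq 0$, the terms with $k\geq 2$ vanish a.e.\ on $(\beta-t_2,\beta-t_1)$, forcing $\phi=0$ a.e.\ on $(\beta-(t_2-t_1),\beta)$ and contradicting the definition of $\beta=\sup\mathrm{supp}(\phi)$; iterating (or just invoking the contradiction with $c_1\neq0$) finishes it. The distributional Fourier argument for $C_0$ and $L^p$, $1\leq p<\infty$, is also complete as sketched: $P(\xi)=\sum_k c_k e^{2\pi i t_k\xi}$ extends to a nonzero entire function (distinct frequencies, nonzero coefficients), so $Z=P^{-1}(0)\cap\mathbb{R}$ is closed and discrete; the division $\psi\mapsto\psi/P$ is legitimate for $\psi\in C_c^\infty(\mathbb{R}\setminus Z)$ because $|P|$ is bounded below on the compact support of $\psi$, giving $\mathrm{supp}\,\widehat\phi\subseteq Z$; the structure theorem for distributions supported at a point and the identity $\widehat{\phi*\check\chi}=\chi\widehat\phi$ for $\phi\in\mathcal{S}'$, $\check\chi\in\mathcal{S}$ are standard; and Young's inequality together with $L^1*C_0\subseteq C_0$ rules out a nonzero polynomial times a character. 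The only caveat is that this is genuinely a one-dimensional argument (it is what makes $Z$ discrete), which is exactly why the analogous statement fails on $\mathbb{R}^n$ for large $p$ --- consistent with the Edgar--Rosenblatt phenomenon the paper quotes. What your proof buys over the paper's is self-containment and an explicit view of where each hypothesis ($K$, $C_0$, $L^p$ with $p<\infty$) enters; what the citation buys is brevity and access to the sharper general results of Edgar--Rosenblatt that the rest of the paper also leans on.
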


\begin{proof}
The proof is in \cite{Ros}, cor 2.11 and Theorem 1.4.
\end{proof}

\section{Results}

Throughout the remainder of the paper, we may assume that $G$ is not the
identity group. In fact if $G$ is the identity group, any function in
$L^{p}\left(
\mathbb{R}
^{n}\right)  $ is obviously a solution to Equation (\ref{dep}). Let
$\mathbf{m}$ be the canonical Lebesgue measure on $%
\mathbb{R}
^{n}.$ Let $\phi\in L^{p}\left(
\mathbb{R}
^{n}\right)  $ with $1\leq p<\infty$ be a measurable non-zero function. Let
\textrm{supp}$\left(  \phi\right)  $ be the support of $\phi.$ Then it is
clear that $\mathbf{m}\left(  \mathrm{supp}\left(  \phi\right)  \cap
\Omega\right)  >0.$

\begin{lemma}
Suppose that $G\neq\left\{  1_{n\times n}\right\}  .$ Fix $\Omega\subset%
\mathbb{R}
^{n}.$ If there exists $v\in\Omega,$ such that $G_{v}$ is non-trivial then the
equation
\[
\sum_{k=1}^{m}c_{k}f\left(  e^{t_{k}A}v\right)  =0
\]
has a non-trivial solution.
\end{lemma}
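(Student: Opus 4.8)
The plan is to reduce the entire statement to producing a single real number $r\neq0$ for which $e^{rA}=1_{n\times n}$; once such an $r$ is in hand, the equation is solved by a two–term relation. First I would unwind the hypothesis. Since $v\in\Omega$ and $G_{v}$ is non-trivial, there is $t_{0}\neq0$ with $e^{t_{0}A}v=v$, so the action of $G$ is not free at $v$ and the orbit $Gv$ is not diffeomorphic to $\mathbb{R}$. By the preceding orbit lemma this can happen only when $Gv$ is a circle, which forces $A$ to be diagonalizable with spectrum contained in $i\mathbb{R}$: if $A$ had a non-zero nilpotent part or an eigenvalue with non-zero real part, the corresponding coordinates of a point all of whose blocks are active (as every point of $\Omega$ is) would grow or decay, and no such generic point could recur. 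Thus we are exactly in the setting of Lemma \ref{lattice}.

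Next I would invoke Lemma \ref{lattice}, which gives that on the dense, $G$-invariant set $\Omega$ the assignment $w\mapsto G_{w}$ is constant and equal to a lattice subgroup $e^{r\mathbb{Z}A}$ with $r\neq0$. In particular the single element $e^{rA}$ fixes every point of $\Omega$. This is the heart of the argument: from $e^{rA}w=w$ for all $w$ in the dense set $\Omega$, together with the continuity of the map $w\mapsto e^{rA}w$, one concludes $e^{rA}=1_{n\times n}$, i.e.\ $G$ is a compact (circle) group. Establishing this identity is the only real obstacle; it rests on the rational-relatedness hypothesis together with the assumed closedness of $G$ (which together force $G$ to be a genuine circle rather than a dense winding in a torus), and on the constancy of the isotropy on a dense set, rather than on any delicate estimate.

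Finally, with $e^{rA}=1_{n\times n}$ secured, I would take $m=2$, parameters $(t_{1},t_{2})=(0,r)$ with $t_{1}\neq t_{2}$, coefficients $(c_{1},c_{2})=(1,-1)$, and let $f$ be any non-zero function in $L^{p}(\mathbb{R}^{n})$. Then for every $x$,
\[
c_{1}f\left(e^{t_{1}A}x\right)+c_{2}f\left(e^{t_{2}A}x\right)=f(x)-f\left(e^{rA}x\right)=f(x)-f(x)=0,
\]
so that $\sum_{k=1}^{2}c_{k}f\!\left(e^{t_{k}A}\cdot\right)=0$ with $[c_{1},c_{2}]\neq0$. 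This is a non-trivial solution, and in particular it yields the corresponding relation at the given point $v$, completing the proof.
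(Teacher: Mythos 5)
Your proof is correct, and it takes a genuinely different (and in fact more robust) route than the paper's. The paper argues that once some $v\in\Omega$ has non-trivial isotropy, every orbit in $\Omega$ is a circle, and then takes $\phi$ to be the indicator function of the unit disk $D_{n}=\{v\in\Omega:\Vert v\Vert\leq1\}$, claiming $e^{tA}(D_{n})\subseteq D_{n}$ for all $t$ so that $\phi(v)-\phi(e^{A}v)=0$. You instead push Lemma \ref{lattice} one step further: the common isotropy $e^{r\mathbb{Z}A}$ fixes the dense set $\Omega$ pointwise, and since $w\mapsto e^{rA}w$ is linear (hence continuous), $e^{rA}=1_{n\times n}$, whence $f(\cdot)-f(e^{rA}\cdot)=0$ for \emph{every} $f$. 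Your approach buys two things: the conclusion is stronger, since every non-zero $f\in L^{p}(\mathbb{R}^{n})$ solves the equation rather than one hand-picked indicator function; and you sidestep the paper's invariance claim $e^{tA}(D_{n})\subseteq D_{n}$, which as written requires $e^{tA}$ to be orthogonal --- a matrix merely conjugate to a rotation need not preserve the Euclidean ball, so the paper's disk should really be taken with respect to an adapted inner product. The one point to keep in mind is that your reduction to ``$A$ diagonalizable with spectrum in $i\mathbb{R}$'' relies on the paper's orbit-classification lemma exactly as the paper itself does, so you inherit rather than avoid that dependency; that is, however, the intended use of that lemma.
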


\begin{proof}
The proof is fairly easy. If there exists some vector $u\in\Omega$ such that
$G_{u}$ is non-trivial, then it must be the case that $G_{v}$ is non-trivial
for every $v\in\Omega$. Since $G$ is not equal to the identity group, the
$G$-orbit of $v$ is diffeomorphic to a circle. Let us consider the unit disk
$D_{n}=\left\{  v\in\Omega:\left\Vert v\right\Vert \leq1\right\}  .$ It is
clear that for any $t\in%
\mathbb{R}
,$ $e^{tA}\left(  D_{n}\right)  \subseteq D_{n}.$ Thus, letting $\phi$ be the
indicator function of the unit disk, it is easy to see that $\phi\left(
v\right)  -\phi\left(  e^{A}v\right)  =0.$
\end{proof}

Fix $\Omega\subset%
\mathbb{R}
^{n}.$ From now on, we assume that the action of $G$ is \textbf{free} in
$\Omega.$ In other words $G_{v}=\left\{  I_{n\times n}\right\}  $ for all
$v\in\Omega.$ Fix $v\in$ \textrm{supp}$\left(  \phi\right)  \cap\Omega.$ We
define the map $\varphi_{v}:Gv\rightarrow%
\mathbb{R}
$ with $\varphi_{v}\left(  e^{tA}v\right)  =t.$ We observe that $\varphi_{v}$
is an analytic diffeomorphism. Given any measurable function $\beta
:Gv\rightarrow%
\mathbb{C}
,$ the function $\beta\circ\varphi_{v}^{-1}$ is a well-defined function of the
real line which we identify with $\beta$ via the diffeomorphism $\varphi_{v}.$
Moreover, notice that $\varphi_{v}\circ e^{tA}\circ\varphi_{v}^{-1}=L_{t}$
and
\begin{equation}
\varphi_{v}\circ e^{tA}\circ\varphi_{v}^{-1}=L_{t}\Leftrightarrow e^{tA}%
\circ\varphi_{v}^{-1}=\varphi_{v}^{-1}\circ L_{t} \label{conj}%
\end{equation}
with $L_{t}\left(  s\right)  =s+t.$ The action of $\varphi_{v}\circ
e^{tA}\circ\varphi_{v}^{-1}$ is therefore a translation action along the
$G$-orbit of $v.$ Now, let us suppose that there is a non-trivial solution in
$L^{p}\left(
\mathbb{R}
^{n}\right)  $ with $1\leq p<\infty$ for equation \ref{dep}. Let $\phi$ be a
non-trivial solution and $\phi_{v}$ the restriction of the function $\phi$
onto the $G$-orbit of $v\in\Omega.$ Then it is clear that $\sum_{k=1}^{m}%
c_{k}\phi_{v}\left(  e^{t_{k}A}\cdot\right)  =0$ for some non-trivial
$c_{k}.${}

\begin{proposition}
\label{two}Suppose that $G\neq\left\{  1_{n\times n}\right\}  .$ If there
exists $v_{0}\in%
\mathbb{R}
^{n}$ such that $G_{v_{0}}$ is trivial, then $c_{1}\phi\left(  e^{t_{1}A}%
\cdot\right)  +c_{2}\phi\left(  e^{t_{2}A}\cdot\right)  =0$ has no non-trivial
solution in $L^{p}\left(
\mathbb{R}
^{n}\right)  $ for $c_{1},c_{2}\in%
\mathbb{C}
$ and $t_{1},t_{2}\in%
\mathbb{R}
.$
\end{proposition}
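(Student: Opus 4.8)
The plan is to reduce to the case $c_1,c_2\neq0$ and then exploit the scaling self-similarity forced on $\phi$ together with an essentially disjoint tiling of $\Omega$ coming from the cross-section of Lemma~\ref{cross-section}. First I would dispose of the degenerate cases: if $c_1=0$ then $c_2\phi\left(e^{t_2A}\cdot\right)=0$ forces $c_2=0$ since $\phi\neq0$, and symmetrically if $c_2=0$; so a non-trivial solution must have $c_1,c_2\neq0$. Replacing $x$ by $e^{-t_1A}x$ in the relation $c_1\phi\left(e^{t_1A}x\right)+c_2\phi\left(e^{t_2A}x\right)=0$ turns it into $\phi\left(x\right)=\lambda\,\phi\left(e^{sA}x\right)$ for almost every $x$, where $s=t_2-t_1\neq0$ and $\lambda=-c_2/c_1\neq0$. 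Iterating gives $\phi\left(e^{ksA}x\right)=\lambda^{-k}\phi\left(x\right)$ for every $k\in\mathbb{Z}$ and almost every $x$, so that $h:=\abs{\phi}^{p}$ satisfies $h\left(e^{ksA}x\right)=\abs{\lambda}^{-kp}h\left(x\right)$.

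Next I would set up the tiling. Let $\Sigma$ be the cross-section produced by Lemma~\ref{cross-section} and, taking without loss of generality $s>0$, put $U=\bigcup_{0\leq t<s}e^{tA}\Sigma$. Because the action is free in $\Omega$, every orbit meets $\Sigma$ in exactly one point and is parametrized by all of $\mathbb{R}$; hence the defining properties of a cross-section give, up to a null set, the disjoint decomposition $\Omega=\bigsqcup_{k\in\mathbb{Z}}e^{ksA}U$. The heart of the argument is then a change of variables on each tile. Since $h\in L^{1}\left(\mathbb{R}^{n}\right)$ with $\left\Vert h\right\Vert _{1}=\left\Vert \phi\right\Vert _{p}^{p}\in\left(0,\infty\right)$, substituting $y=e^{ksA}x$ and using $h\left(e^{ksA}x\right)=\abs{\lambda}^{-kp}h\left(x\right)$ together with $d\mathbf{m}\left(y\right)=\abs{\det e^{ksA}}\,d\mathbf{m}\left(x\right)$ yields $\int_{e^{ksA}U}h\,d\mathbf{m}=\kappa^{k}\int_{U}h\,d\mathbf{m}$, where $\kappa=\abs{\det e^{sA}}\,\abs{\lambda}^{-p}>0$.

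Summing over the tiles gives $\left\Vert \phi\right\Vert _{p}^{p}=\Big(\sum_{k\in\mathbb{Z}}\kappa^{k}\Big)\int_{U}h\,d\mathbf{m}$. The decisive observation is that a bilateral geometric series $\sum_{k\in\mathbb{Z}}\kappa^{k}$ diverges for \emph{every} $\kappa>0$; since the left-hand side is finite, one must have $\int_{U}h\,d\mathbf{m}=0$, whence $h=0$ almost everywhere on $U$ and, by the flow-invariance of this vanishing, $\phi=0$ almost everywhere on the co-null set $\Omega$. This contradicts $\phi\neq0$ and completes the proof.

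I expect the main obstacle to be the rigorous justification of the tiling and of the change of variables: one must verify that the cross-section parametrization $\left(c,t\right)\mapsto e^{tA}c$ is a bi-measurable bijection off a null set, that the translates $e^{ksA}U$ are essentially disjoint and exhaust $\Omega$, and that the Jacobian factor is exactly $\abs{\det e^{ksA}}$. All of these rest squarely on Lemma~\ref{cross-section} and on the freeness of the action in $\Omega$, which is precisely what forces each orbit to be a full line rather than a circle. It is this non-compactness of the orbits, giving the full range $k\in\mathbb{Z}$, that makes the bilateral series diverge and drives the contradiction; the assumption that some $G_{v_0}$ is trivial is exactly the hypothesis supplying it.
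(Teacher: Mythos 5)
Your argument is correct and follows essentially the same route as the paper's: reduce to the single self-similarity relation $\phi=\lambda\,\phi\left(e^{sA}\cdot\right)$, tile $\Omega$ by the $\mathbb{Z}$-translates of a fundamental domain for the discrete group generated by $e^{sA}$, and derive a contradiction from the divergence of the resulting bilateral geometric series. The only differences are presentational: the paper first pins down $\left\vert\lambda\right\vert=\left\vert\det e^{sA}\right\vert^{1/p}$ so that all tiles carry equal mass and cites a cross-section for the discrete action from the literature, whereas you keep $\lambda$ general (correctly noting that $\sum_{k\in\mathbb{Z}}\kappa^{k}$ diverges for every $\kappa>0$) and build the fundamental domain directly from Lemma~\ref{cross-section}.
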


\begin{proof}
First, notice that since the isotropy group of $v_{0}$ is equal to the
identity group, the matrix $A$ has at least one non-zero eigenvalue which is
not an element of $i%
\mathbb{R}
^{\ast},$ or $A=D+N$ where $D$ is a diagonal matrix with eigenvalues in $i%
\mathbb{R}
$ and $N$ is a non-zero lower triangular nilpotent matrix. Next, if $c_{1}%
\phi\left(  e^{t_{1}A}v\right)  +c_{2}\phi\left(  e^{t_{2}A}v\right)  =0$ then
$\phi\left(  v\right)  =-\frac{c_{2}}{c_{1}}\phi\left(  e^{\left(  t_{2}%
-t_{1}\right)  A}v\right)  .$ Without loss of generality, in order to prove
the proposition, it suffices to show that equation $\phi\left(  v\right)
=c\phi\left(  e^{tA}v\right)  $ only has a trivial solution. By computing the
$p$-norm of $\phi,$ is it easy to see that $c=\left\vert \det\left(
e^{tA}\right)  \right\vert ^{1/p}.$ Also, letting $M=e^{tA}$ for any integer
$k$, if $\phi\left(  v\right)  =\left\vert \det\left(  M\right)  \right\vert
^{1/p}\phi\left(  Mv\right)  $ then $\phi\left(  v\right)  =\left\vert
\det\left(  M\right)  \right\vert ^{k/p}\phi\left(  M^{k}v\right)  .$ Now, we
consider the singly generated group $\Gamma=\left\{  M^{k}:k\in%
\mathbb{Z}
\right\}  $ which also acts on $\Omega.$ Since $A$ has at least one eigenvalue
which is not purely imaginary, then $\Gamma$ is discrete, and is a closed
subgroup of $GL\left(  n,%
\mathbb{R}
\right)  .$ It is not too hard to see that given $\gamma\in\Gamma,$ if
$\gamma\neq1,$ then for each point $v\in\Omega,$ there exists an open subset
$U\subset\Omega$ such that $\gamma U\cap U$ is empty. By Corollary 2.9.12 in
\cite{Var}, $\Omega/\Gamma$ has an analytic structure such that the mapping
$\gamma\mapsto\gamma v$ is a submersion. We refer the reader to \cite{Speegle}%
, where the authors construct a cross-section for the action of $\Gamma$. Let
$\Sigma^{\circ}$ be a measurable cross-section of the action of $\Gamma$ in
$\Omega,$ and $\chi_{\Sigma^{\circ}}$ the characteristic function of the set
$\Sigma^{\circ}.$ For any $k\in%
\mathbb{Z}
,$ we obtain
\[
\left\Vert \phi\chi_{\Sigma^{\circ}}\right\Vert _{L^{p}\left(
\mathbb{R}
^{n}\right)  }^{p}=\int_{\Sigma^{\circ}}\left\vert \det M\right\vert
^{k}\left\vert \phi\left(  M^{k}v\right)  \right\vert ^{p}dv=\int
_{M^{-k}\left(  \Sigma^{\circ}\right)  }\left\vert \phi\left(  v\right)
\right\vert ^{p}dv.
\]
However, $\left\Vert \phi\right\Vert _{L^{p}\left(
\mathbb{R}
^{n}\right)  }^{p}$ is equal to $\sum_{k\in%
\mathbb{Z}
}\left\Vert \phi\chi_{M^{-k}\left(  \Sigma^{\circ}\right)  }\right\Vert
_{L^{p}\left(
\mathbb{R}
^{n}\right)  }^{p},$ and the latter is also equal to $\sum_{k\in%
\mathbb{Z}
}\left\Vert \phi\chi_{\Sigma^{\circ}}\right\Vert _{L^{p}\left(
\mathbb{R}
^{n}\right)  }^{p}.$ If $\left\Vert \phi\chi_{\Sigma^{\circ}}\right\Vert
_{L^{p}\left(
\mathbb{R}
^{n}\right)  }^{p}=0$ then $\phi=0.$ Otherwise, $\phi$ is not an element of
$L^{p}\left(
\mathbb{R}
^{n}\right)  .$ This completes the proof.
\end{proof}

From now on, we make the following assumptions. First, for an equation of the
type $\sum_{k=1}^{m}c_{k}\phi\left(  e^{t_{k}A}\cdot\right)  =0,$ we will
assume that $m>2.$ Secondly, we assume that $G$ acts freely on every $G$-orbit
in $\Omega.$

\begin{lemma}
\label{restriction} Suppose that $G\neq\left\{  1_{n\times n}\right\}  .$ For
any $v\in$ $\mathrm{supp}\left(  \phi\right)  \cap\Omega,$
\[%
{\displaystyle\sum\limits_{k=1}^{m}}
c_{k}\phi_{v}\left(  e^{t_{k}A}\cdot\right)  =%
{\displaystyle\sum\limits_{k=1}^{m}}
c_{k}T_{t_{k}}\left(  \phi_{v}\circ\varphi_{v}^{-1}\right)  \left(
\cdot\right)  ,
\]
where $T_{t_{k}}:L^{p}\left(
\mathbb{R}
\right)  \rightarrow L^{p}\left(
\mathbb{R}
\right)  $ such that $T_{t_{k}}f\left(  x\right)  =f\left(  x+t_{k}\right)  .$
\end{lemma}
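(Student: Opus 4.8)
The statement to prove is Lemma \ref{restriction}, which asserts the identity
\[
\sum_{k=1}^{m} c_{k}\phi_{v}\left(e^{t_{k}A}\cdot\right)
= \sum_{k=1}^{m} c_{k} T_{t_{k}}\left(\phi_{v}\circ\varphi_{v}^{-1}\right)(\cdot),
\]
where $T_{t_{k}}f(x)=f(x+t_{k})$ and $\varphi_{v}$ is the diffeomorphism from $Gv$ to $\mathbb{R}$ sending $e^{tA}v \mapsto t$. Let me sketch my approach.

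\begin{proof}[Proof sketch]
The plan is to verify the identity termwise, so it suffices to show for each fixed $k$ that $\phi_{v}\left(e^{t_{k}A}\cdot\right)$ and $T_{t_{k}}\left(\phi_{v}\circ\varphi_{v}^{-1}\right)$ agree as functions after identifying $Gv$ with $\mathbb{R}$ via $\varphi_{v}$. First I would fix an arbitrary point $w \in Gv$ and write $w = e^{sA}v = \varphi_{v}^{-1}(s)$, so that the real-line coordinate of $w$ is $s = \varphi_{v}(w)$. The left-hand term, evaluated at $w$, is $\phi_{v}\left(e^{t_{k}A} w\right) = \phi_{v}\left(e^{t_{k}A} e^{sA} v\right) = \phi_{v}\left(e^{(s+t_{k})A} v\right)$, using that $G$ is a one-parameter group so $e^{t_{k}A}e^{sA} = e^{(s+t_{k})A}$.

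Next I would rewrite this in terms of the coordinate. Since $\varphi_{v}\left(e^{(s+t_{k})A}v\right) = s+t_{k}$, we have $e^{(s+t_{k})A}v = \varphi_{v}^{-1}(s+t_{k})$, hence
\[
\phi_{v}\left(e^{t_{k}A} w\right) = \left(\phi_{v}\circ\varphi_{v}^{-1}\right)(s+t_{k}) = T_{t_{k}}\left(\phi_{v}\circ\varphi_{v}^{-1}\right)(s).
\]
This is exactly the right-hand term evaluated at the coordinate $s = \varphi_{v}(w)$, which completes the termwise verification. The cleanest way to package this is via the conjugation relation \eqref{conj}, namely $e^{tA}\circ\varphi_{v}^{-1} = \varphi_{v}^{-1}\circ L_{t}$ with $L_{t}(s)=s+t$, which is established earlier in the excerpt. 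Applying this with $t = t_{k}$ and precomposing $\phi_{v}$ gives $\phi_{v}\circ e^{t_{k}A}\circ\varphi_{v}^{-1} = \phi_{v}\circ\varphi_{v}^{-1}\circ L_{t_{k}} = T_{t_{k}}\left(\phi_{v}\circ\varphi_{v}^{-1}\right)$ directly, so the per-term equality is immediate from \eqref{conj}. Summing over $k$ with the coefficients $c_{k}$ yields the claimed identity.

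I expect no substantive obstacle here: the content is purely the bookkeeping that the $G$-action on the orbit $Gv$ is intertwined with ordinary translation on $\mathbb{R}$ through the coordinate map $\varphi_{v}$, which is precisely what the freeness assumption (so that $\varphi_{v}$ is a well-defined analytic diffeomorphism, by the discussion preceding the lemma) guarantees. The only point requiring any care is ensuring the restriction $\phi_{v}$ and its transport $\phi_{v}\circ\varphi_{v}^{-1}$ are treated consistently as functions on $Gv$ versus on $\mathbb{R}$; this is already set up in the paragraph introducing $\varphi_{v}$, where $\beta\circ\varphi_{v}^{-1}$ is identified with $\beta$ via the diffeomorphism. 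Given that setup, the lemma reduces to the one-line application of the group law and relation \eqref{conj} described above.
\end{proof}
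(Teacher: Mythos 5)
Your proposal is correct and follows essentially the same route as the paper's own proof: fix $w=e^{sA}v=\varphi_{v}^{-1}(s)$ on the orbit, use the one-parameter group law together with relation \eqref{conj} to convert $\phi_{v}\left(e^{t_{k}A}w\right)$ into $\left(\phi_{v}\circ\varphi_{v}^{-1}\right)\left(s+t_{k}\right)$, and sum over $k$. No issues to report.
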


\begin{proof}
Let $w\in Gv$, using (\ref{conj}), there exists a unique $t_{w}\in%
\mathbb{R}
$ such that $w=e^{t_{w}A}v.$ Thus,
\[%
{\displaystyle\sum\limits_{k=1}^{m}}
c_{k}\phi_{v}\left(  e^{t_{k}A}w\right)  =%
{\displaystyle\sum\limits_{k=1}^{m}}
c_{k}\phi_{v}\left(  e^{t_{k}A}e^{t_{w}A}v\right)  =%
{\displaystyle\sum\limits_{k=1}^{m}}
c_{k}\phi_{v}\left(  e^{t_{k}A}\varphi_{v}^{-1}\left(  t_{w}\right)  \right)
.
\]
Finally,
\[%
{\displaystyle\sum\limits_{k=1}^{m}}
c_{k}\phi_{v}\left(  e^{t_{k}A}w\right)  =%
{\displaystyle\sum\limits_{k=1}^{m}}
c_{k}\phi_{v}\left(  \varphi_{v}^{-1}\left(  t_{w}+t_{k}\right)  \right)  =%
{\displaystyle\sum\limits_{k=1}^{m}}
c_{k}T_{t_{k}}\phi_{v}\left(  \varphi_{v}^{-1}\left(  t_{w}\right)  \right)
.
\]
\end{proof}

We recall that
\begin{equation}
\Lambda_{\phi}=\left\{
\begin{array}
[c]{c}%
v\in\mathrm{supp}\left(  \phi\right)  \cap\Omega:\phi_{v}\circ\varphi_{v}%
^{-1}\in C_{0}\left(
\mathbb{R}
\right)  \text{ }\\
\text{or }L^{p}\left(
\mathbb{R}
\right)  \text{ or }K\left(
\mathbb{R}
\right)
\end{array}
\right\}  . \label{Lambda}%
\end{equation}

\begin{proposition}
\label{freeaction}Suppose that $G\neq\left\{  1_{n\times n}\right\}  ,$ and
that $G$ acts freely in $\Omega\subset%
\mathbb{R}
^{n}.$ If $\mathbf{m}\left(  \Lambda_{\phi}\right)  >0$ for some $p\in\left[
1,\infty\right)  $ then the equation
\begin{equation}%
{\displaystyle\sum\limits_{k=1}^{m}}
c_{k}\phi\left(  e^{t_{k}A}\cdot\right)  =0 \label{equation}%
\end{equation}
does not have a non-trivial solution in $L^{p}\left(
\mathbb{R}
^{n}\right)  .$
\end{proposition}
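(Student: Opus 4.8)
The plan is to reduce the higher-dimensional dilation problem to a one-dimensional translation problem along orbits, and then invoke the known linear-independence result for translates stated in the earlier lemma (the Edgar--Rosenblatt result). The whole strategy rests on Lemma~\ref{restriction}, which tells us that restricting the dilation equation to a single orbit $Gv$ and transporting via the diffeomorphism $\varphi_v$ turns $\sum_k c_k \phi(e^{t_kA}\cdot)$ into $\sum_k c_k T_{t_k}(\phi_v\circ\varphi_v^{-1})$, a genuine linear combination of translates of the one-variable function $\phi_v\circ\varphi_v^{-1}$ on $\mathbb{R}$.

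First I would argue by contraposition: suppose $\phi$ \emph{is} a non-trivial solution of Equation~(\ref{equation}) in $L^p(\mathbb{R}^n)$; the goal is to show $\mathbf{m}(\Lambda_\phi)=0$. Fix any $v\in\mathrm{supp}(\phi)\cap\Omega$. Since the equation holds as an $L^p$ function, it holds almost everywhere on $\mathbb{R}^n$, and by a Fubini-type argument (decomposing $\Omega$ via the cross-section $\Sigma$ from Lemma~\ref{cross-section} and the orbit coordinate) the restricted equation $\sum_k c_k\phi_v(e^{t_kA}\cdot)=0$ holds on almost every orbit. By Lemma~\ref{restriction} this is exactly $\sum_k c_k T_{t_k}(\phi_v\circ\varphi_v^{-1})=0$ on $\mathbb{R}$. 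Thus the one-variable function $g_v:=\phi_v\circ\varphi_v^{-1}$ has linearly dependent translates (the $t_k$ are distinct and the $c_k$ are not all zero).

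Next I would invoke the earlier lemma on translates: if $g_v$ is a nonzero member of $C_0(\mathbb{R})$, or of $L^p(\mathbb{R})$ for $p\in[1,\infty)$, or of $K(\mathbb{R})$, then its translates are linearly independent. Hence, for any $v$ at which $g_v$ lies in one of these classes \emph{and} is nonzero, the dependence relation forces $g_v\equiv 0$, meaning $\phi$ vanishes on the whole orbit $Gv$. But $\Lambda_\phi$ is precisely the set of $v\in\mathrm{supp}(\phi)\cap\Omega$ for which $g_v$ lands in one of these three classes; and membership in $\mathrm{supp}(\phi)$ on a full-measure set of orbits is incompatible with $\phi$ vanishing along the orbit. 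The conclusion is that the dependence relation can hold only on the null set where the orbital restriction fails to be in any of these classes, so $\mathbf{m}(\Lambda_\phi)=0$, contradicting $\mathbf{m}(\Lambda_\phi)>0$.

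The main obstacle is the measure-theoretic bookkeeping connecting the ambient equation to its orbital restrictions: one must justify that ``$\sum_k c_k\phi(e^{t_kA}\cdot)=0$ in $L^p(\mathbb{R}^n)$'' implies the restricted identity holds on almost every orbit, and that ``$g_v\equiv 0$ for a.e.\ $v\in\Lambda_\phi$'' genuinely contradicts $\mathbf{m}(\Lambda_\phi)>0$. This requires a clean coarea/disintegration of Lebesgue measure on $\Omega$ along the orbit foliation furnished by the cross-section $\Sigma$ (so that null sets in the quotient correspond to null sets in $\Omega$), together with the measurability of $v\mapsto g_v$ and of the defining condition for $\Lambda_\phi$. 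Once that disintegration is in place, the logical core is immediate from the translate lemma applied fiberwise.
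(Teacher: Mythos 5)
Your proposal follows essentially the same route as the paper: restrict the dilation equation to individual orbits via Lemma~\ref{restriction}, recognize the result as a linear dependence of translates of $\phi_v\circ\varphi_v^{-1}$ on $\mathbb{R}$, and invoke the Edgar--Rosenblatt lemma to force $\phi_v\circ\varphi_v^{-1}\equiv 0$ for $v\in\Lambda_\phi$, contradicting $\Lambda_\phi\subset\mathrm{supp}(\phi)$ when $\mathbf{m}(\Lambda_\phi)>0$. The measure-theoretic disintegration you flag as the main obstacle is in fact passed over silently in the paper's own proof, so your version is, if anything, more explicit about what needs justification.
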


\begin{proof}
Let us suppose that Equation (\ref{equation}) has a non-trivial solution. For
each $v$ in $\Lambda_{\phi},$ applying the results in Lemma \ref{restriction}
then the restriction of a function $\phi\in L^{p}\left(
\mathbb{R}
^{n}\right)  $ (which is a solution of the equation) to the $G$-orbit of
$v_{0}\in$ \textrm{supp}$\left(  \phi\right)  \cap\Omega$ yields to the
following equation.
\begin{equation}%
{\displaystyle\sum\limits_{k=1}^{m}}
c_{k}\phi_{v_{0}}\left(  e^{t_{k}A}\cdot\right)  =%
{\displaystyle\sum\limits_{k=1}^{m}}
c_{k}T_{t_{k}}\left(  \phi_{v_{0}}\circ\varphi_{v_{0}}^{-1}\right)  \left(
\cdot\right)  =0. \label{EQ}%
\end{equation}
Let us suppose that $\mathbf{m}\left(  \Lambda_{\phi}\right)  >0$ for some
$p\in\left[  1,\infty\right)  $ and equation (\ref{equation}) has a
non-trivial solution in $L^{p}\left(
\mathbb{R}
^{n}\right)  .$ Using Cor $2.11$ and Theorem $1.4$ in \cite{Ros} then
$\phi_{v}\left(  \varphi_{v}^{-1}\left(  t\right)  \right)  =0$ for all $t\in%
\mathbb{R}
$ and for all $v$ in $\Lambda_{\phi}.$ Let $\chi_{\Lambda_{\phi}}$ be the
characteristic function of the set $\Lambda_{\phi}.$ Thus $\phi\chi
_{\Lambda_{\phi}}$ is a zero function. That would be a contradiction, since
$\Lambda_{\phi}\subset$ \textrm{supp}$\left(  \phi\right)  .$
\end{proof}

Let $C\left(
\mathbb{R}
^{n}\right)  $ be the set of all continuous function on $%
\mathbb{R}
^{n}.$

\begin{corollary}
The equation $\sum_{k=1}^{m}c_{k}\phi\left(  e^{t_{k}A}\cdot\right)  =0$ has
no non-zero solution in $L^{p}\left(
\mathbb{R}
^{n}\right)  \cap C\left(
\mathbb{R}
^{n}\right)  .$

\begin{proof}
Suppose by contradiction that $\phi\in L^{p}\left(
\mathbb{R}
^{n}\right)  \cap C\left(
\mathbb{R}
^{n}\right)  $ is a non-trivial solution to the equation. Clearly
$\mathbf{m}\left(  \Lambda_{\phi}\right)  >0.$ According to Proposition
\ref{freeaction} that would be a contradiction.
\end{proof}

\end{corollary}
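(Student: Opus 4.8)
The plan is to argue by contradiction and reduce everything to Proposition \ref{freeaction}. Suppose $\phi\in L^{p}(\mathbb{R}^{n})\cap C(\mathbb{R}^{n})$ is nonzero and solves $\sum_{k=1}^{m}c_{k}\phi(e^{t_{k}A}\cdot)=0$. If I can show that this forces $\mathbf{m}(\Lambda_{\phi})>0$ for the set $\Lambda_{\phi}$ of equation (\ref{Lambda}), then Proposition \ref{freeaction} gives an immediate contradiction and the corollary follows. So the entire content of the proof is the single assertion that a \emph{continuous} nonzero $L^{p}$ function meets $\Lambda_{\phi}$ on a set of positive Lebesgue measure.

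To produce such orbits I would disintegrate the $L^{p}$-norm of $\phi$ along the group orbits using the cross-section of Lemma \ref{cross-section}. Writing $\Sigma$ for that cross-section in $\Omega$ and $d\sigma$ for its induced measure, the map $(t,s)\mapsto e^{tA}s$ is a diffeomorphism of $\mathbb{R}\times\Sigma$ onto $\Omega$ whose Jacobian factors as $|\det e^{tA}|\,J_{0}(s)$ with $J_{0}(s)>0$ depending only on $s$. The change of variables then gives $\int_{\Omega}|\phi|^{p}\,d\mathbf{m}=\int_{\Sigma}J_{0}(s)\big(\int_{\mathbb{R}}|\phi(e^{tA}s)|^{p}\,|\det e^{tA}|\,dt\big)\,d\sigma(s)$, and since the left side is finite, for almost every $s\in\Sigma$ the inner integral is finite. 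Because the classes $C_{0}(\mathbb{R})$, $L^{p}(\mathbb{R})$ and $K(\mathbb{R})$ are translation invariant, $\Lambda_{\phi}$ is $G$-invariant; hence it suffices to exhibit a set of cross-section points $s$ of positive $\sigma$-measure for which $g_{s}:=\phi_{s}\circ\varphi_{s}^{-1}=\phi(e^{(\cdot)A}s)$ lies in one of the three classes and is not identically zero, and then to upgrade positive cross-section measure to positive Lebesgue measure by the same Fubini formula. Here the continuity hypothesis secures the non-vanishing: the set $\{\phi\neq0\}\cap\Omega$ is open and nonempty, so it meets a family of orbits of positive cross-section measure, and on each such orbit $g_{s}$ is continuous with $g_{s}\not\equiv0$.

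The hard part will be passing from finiteness of the \emph{weighted} integral $\int_{\mathbb{R}}|g_{s}(t)|^{p}\,|\det e^{tA}|\,dt$ to genuine membership of $g_{s}$ in one of the translation-invariant classes, and this is where I expect the main obstacle to lie. When $A$ is traceless the weight $|\det e^{tA}|=e^{t\,\mathrm{tr}(A)}$ is identically $1$, so the inner integral \emph{is} $\|g_{s}\|_{L^{p}(\mathbb{R})}^{p}$; thus $g_{s}\in L^{p}(\mathbb{R})$ for almost every $s$, the positive-measure family of non-vanishing orbits lands in $\Lambda_{\phi}$, and $\mathbf{m}(\Lambda_{\phi})>0$ as required. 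In the general, non-volume-preserving case the exponential weight can redistribute the mass, so finiteness of the weighted integral no longer forces $g_{s}\in L^{p}(\mathbb{R})$, and one must instead combine the continuity of $\phi$ with the asymptotics of the orbit $t\mapsto e^{tA}s$ at its two ends — in particular whether the orbit escapes every compact set or instead limits onto a fixed point of the flow, where a continuous $L^{p}$ function need not vanish — in order to decide membership in $C_{0}(\mathbb{R})$. Carrying this out uniformly over a positive-measure family of orbits is the delicate step, and it is precisely where the continuity of $\phi$, as opposed to mere $L^{p}$-membership, must do the essential work.
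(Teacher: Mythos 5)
Your overall strategy is the same as the paper's: assume a nonzero continuous $L^{p}$ solution exists, show $\mathbf{m}\left(  \Lambda_{\phi}\right)  >0$, and invoke Proposition \ref{freeaction}. The difference is that the paper disposes of the middle step with the single word ``Clearly,'' while you try to prove it and, by your own admission, only succeed when $A$ is traceless. In that regime your computation is correct, but it is really a special case of Lemma \ref{Lemma1} (proved later in the paper), which gives $\mathbf{m}\left(  \Lambda_{\phi}\right)  >0$ for \emph{every} $\phi\in L^{p}\left(  \mathbb{R}^{n}\right)  $ when $\det e^{A}=1$, with no continuity hypothesis at all; so where your argument works, continuity is doing nothing.

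In the general case your proof has a genuine gap: you never establish that a positive-measure family of orbit restrictions $g_{s}=\phi_{s}\circ\varphi_{s}^{-1}$ lands in $C_{0}\left(  \mathbb{R}\right)  \cup L^{p}\left(  \mathbb{R}\right)  \cup K\left(  \mathbb{R}\right)  $, and you flag this yourself. You should know that the obstacle you identify is real and is not resolved by the paper's one-line proof either. Take $n=1$ and $A=\left[  1\right]  $, so $e^{tA}$ acts by $x\mapsto e^{t}x$, $\Omega=\mathbb{R}\setminus\left\{  0\right\}  $, and the action is free there. For a continuous, compactly supported $\phi$ with $\phi\left(  0\right)  \neq0$, every orbit restriction $g_{v}\left(  t\right)  =\phi\left(  e^{t}v\right)  $ tends to $\phi\left(  0\right)  \neq0$ as $t\rightarrow-\infty$, hence lies in none of the three classes, and $\Lambda_{\phi}=\emptyset$. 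So the implication ``$\phi$ continuous, nonzero, in $L^{p}$ $\Rightarrow$ $\mathbf{m}\left(  \Lambda_{\phi}\right)  >0$'' is false as a general statement, and neither your argument nor the paper's actually derives it for the particular $\phi$ solving the equation. Closing the gap would require either restricting to traceless $A$ (where Lemma \ref{Lemma1} already suffices, without continuity) or a separate analysis of the orbits that accumulate at fixed points of the flow, which is exactly the ``delicate step'' you name but do not carry out.
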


Now, we consider the diffeomorphism $\xi:%
\mathbb{R}
\times\Sigma\rightarrow\Omega$ defined by $\xi\left(  t,w\right)  =e^{tA}w.$
Let $\phi$ be a function in $L^{p}\left(
\mathbb{R}
^{n}\right)  ,$ and let $J_{\xi}\left(  t,w\right)  $ be the Jacobian of $\xi$
at $\left(  t,w\right)  \in%
\mathbb{R}
\times\Sigma.$ Let $L\left(
\mathbb{R}
\times\Sigma\right)  $ be set of all Lebesgue measurable functions on $%
\mathbb{R}
\times\Sigma.$ We define the operator $\mathbf{D:}$ $L^{p}\left(
\mathbb{R}
^{n}\right)  \rightarrow L\left(
\mathbb{R}
\times\Sigma\right)  $ such that
\begin{equation}
\mathbf{D}\phi\left(  w,t\right)  =\phi\left(  \xi\left(  t,w\right)  \right)
\left\vert \det\left(  J_{\xi}\left(  t,w\right)  \right)  \right\vert ^{1/p}.
\label{D}%
\end{equation}
We recall the definitions of the objects $\Sigma_{%
\mathbb{C}
}\ $and $\Omega_{%
\mathbb{C}
}$ as given in Lemma \ref{cross-section}. Define $\xi_{%
\mathbb{C}
}:%
\mathbb{R}
\times\Sigma_{%
\mathbb{C}
}\rightarrow\Omega_{%
\mathbb{C}
}$ such that $\xi_{%
\mathbb{C}
}\left(  t,w\right)  =e^{tA}w.$ Let $J_{\xi_{%
\mathbb{C}
}}$ be the Jacobian of the map $\xi_{%
\mathbb{C}
}.$

\begin{lemma}
\label{LemmaM} $\left\vert \det\left(  J_{\xi_{%
\mathbb{C}
}}\left(  t,w\right)  \right)  \right\vert =\left\vert \det\left(
e^{tA}\right)  \right\vert \mathbf{f}_{%
\mathbb{C}
}\left(  w\right)  ,$ where $\mathbf{f}_{%
\mathbb{C}
}$ is some positive function defined on the cross-section $\Sigma_{%
\mathbb{C}
}.$
\end{lemma}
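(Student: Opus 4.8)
The plan is to compute the Jacobian of the map $\xi_{\mathbb{C}}(t,w)=e^{tA}w$ directly and extract the factor $\lvert\det(e^{tA})\rvert$, showing that the remaining factor depends only on $w$. First I would set up coordinates: the map sends $(t,w)\in\mathbb{R}\times\Sigma_{\mathbb{C}}$ to $e^{tA}w\in\Omega_{\mathbb{C}}$, where $\Sigma_{\mathbb{C}}$ is an $(n-1)$-dimensional submanifold cut out by $\mathbf{F}(v)=0$. The Jacobian matrix $J_{\xi_{\mathbb{C}}}$ is the $n\times n$ matrix whose first column is the $t$-derivative $\partial_t(e^{tA}w)=Ae^{tA}w$ and whose remaining $n-1$ columns are $e^{tA}(\partial_{s_j}w)$, where $s_1,\dots,s_{n-1}$ are local coordinates on $\Sigma_{\mathbb{C}}$. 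The crucial observation is that every column has a factored-out $e^{tA}$: the first column is $e^{tA}(Aw)$ (since $A$ commutes with $e^{tA}$) and the others are $e^{tA}(\partial_{s_j}w)$.

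The key step is then to factor the matrix as
\[
J_{\xi_{\mathbb{C}}}(t,w)=e^{tA}\bigl[\,Aw \;\big|\; \partial_{s_1}w \;\big|\;\cdots\;\big|\;\partial_{s_{n-1}}w\,\bigr].
\]
Taking determinants and using multiplicativity, together with $\det(e^{tA})$ being nonzero, yields
\[
\det\bigl(J_{\xi_{\mathbb{C}}}(t,w)\bigr)=\det\bigl(e^{tA}\bigr)\,\det\bigl[\,Aw \;\big|\; \partial_{s_1}w \;\big|\;\cdots\;\big|\;\partial_{s_{n-1}}w\,\bigr].
\]
Taking absolute values gives $\lvert\det(J_{\xi_{\mathbb{C}}}(t,w))\rvert=\lvert\det(e^{tA})\rvert\cdot\lvert\det[\,Aw\mid\partial_{s_1}w\mid\cdots\mid\partial_{s_{n-1}}w\,]\rvert$. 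I would then define
\[
\mathbf{f}_{\mathbb{C}}(w)=\bigl\lvert\det\bigl[\,Aw \;\big|\; \partial_{s_1}w \;\big|\;\cdots\;\big|\;\partial_{s_{n-1}}w\,\bigr]\bigr\rvert,
\]
which manifestly depends only on $w\in\Sigma_{\mathbb{C}}$ and not on $t$, completing the factorization.

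It remains to verify that $\mathbf{f}_{\mathbb{C}}$ is strictly positive on $\Sigma_{\mathbb{C}}$. This is exactly the statement that $\xi_{\mathbb{C}}$ is a local diffeomorphism, i.e. that the vectors $Aw,\partial_{s_1}w,\dots,\partial_{s_{n-1}}w$ are linearly independent at each $w\in\Sigma_{\mathbb{C}}$. Geometrically, $\partial_{s_1}w,\dots,\partial_{s_{n-1}}w$ span the tangent space $T_w\Sigma_{\mathbb{C}}$, while $Aw$ is the velocity vector of the orbit through $w$, i.e. the direction transverse to the cross-section. Since $\Sigma_{\mathbb{C}}$ is a genuine cross-section (each orbit meets it in exactly one point and the orbits have maximal dimension because $G$ acts freely), the orbit direction $Aw$ is not tangent to $\Sigma_{\mathbb{C}}$, so these $n$ vectors form a basis of the ambient space and the determinant is nonzero.

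The main obstacle I anticipate is the transversality/positivity argument in the last paragraph: one must confirm that $Aw$ is genuinely transverse to $T_w\Sigma_{\mathbb{C}}$ for every $w\in\Sigma_{\mathbb{C}}$, rather than merely generically. Here freeness of the action (so that $Aw\neq 0$, as $G_w$ is trivial) combined with the explicit construction of $\Sigma_{\mathbb{C}}$ as the zero set of $\mathbf{F}$ in Lemma \ref{cross-section} does the work: in each of the three cases there, the defining equation $\mathbf{F}(v)=0$ was obtained by solving $\mathbf{F}(e^{tA}v)=0$ for a unique $t=t_v$, which forces $\tfrac{d}{dt}\mathbf{F}(e^{tA}w)\big|_{t=0}\neq 0$, i.e. $\nabla\mathbf{F}(w)\cdot Aw\neq 0$. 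Since $T_w\Sigma_{\mathbb{C}}=\ker\bigl(d\mathbf{F}_w\bigr)$, this nonvanishing is precisely the transversality we need, guaranteeing $\mathbf{f}_{\mathbb{C}}(w)>0$ everywhere on $\Sigma_{\mathbb{C}}$.
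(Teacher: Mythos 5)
Your argument is correct, but it takes a genuinely different route from the paper's. The paper proves the lemma by direct computation: for each of the three cases in Lemma \ref{cross-section} it uses the explicit coordinates on $\Sigma_{\mathbb{C}}$ to write out the Jacobian matrix and read off its determinant, obtaining the explicit values $\mathbf{f}_{\mathbb{C}}(w)=\left\vert v_{\mathbf{k}-1}\right\vert$, $\left\vert \lambda_{1}\right\vert$, and $\left\vert v_{\mathbf{k}-1}+iv_{\mathbf{k}}\right\vert$ in cases $1$, $2$, $3$ respectively. You instead give a structural argument: every column of $J_{\xi_{\mathbb{C}}}$ carries the common left factor $e^{tA}$, since $\partial_{t}\left(e^{tA}w\right)=e^{tA}\left(Aw\right)$ and $\partial_{s_{j}}\left(e^{tA}w\right)=e^{tA}\left(\partial_{s_{j}}w\right)$, so the factorization $\det J_{\xi_{\mathbb{C}}}=\det\left(e^{tA}\right)\det\left[Aw\mid\partial_{s_{1}}w\mid\cdots\mid\partial_{s_{n-1}}w\right]$ is immediate and works for \emph{any} smooth cross-section, with positivity reduced to transversality of the orbit direction $Aw$ to $T_{w}\Sigma_{\mathbb{C}}$. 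Your version is cleaner and explains why the factorization must hold; the paper's version yields concrete formulas for $\mathbf{f}_{\mathbb{C}}$. One caution on your final step: uniqueness of the solution $t_{v}$ of $\mathbf{F}\left(e^{tA}v\right)=0$ does not by itself force $\frac{d}{dt}\mathbf{F}\left(e^{tA}w\right)\big\vert_{t=0}\neq0$ (a function like $t\mapsto t^{3}$ has a unique zero with vanishing derivative), so the transversality really does have to be checked against the three explicit constructions: there $t\mapsto\mathbf{F}\left(e^{tA}v\right)$ is affine in $t$ with slope $v_{\mathbf{k}-1}$ (case $1$) or $\left\vert v_{\mathbf{k}-1}\right\vert^{2}$ (case $3$), or equals $e^{t\alpha_{1}}-1$ with $\alpha_{1}\neq0$ (case $2$), and in each case the derivative is nonvanishing on the relevant set. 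With that verification made explicit, your proof is complete.
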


\begin{proof}
Let $\lambda_{1},\cdots,\lambda_{n}\in%
\mathbb{C}
$ be the eigenvalues of the matrix $A.$ Following the proof of Lemma
\ref{cross-section}, we have three separate cases to consider. With some
elementary computations, we obtain for case $1,$ $\left\vert \det\left(
J_{\xi_{%
\mathbb{C}
}}\left(  t,w\right)  \right)  \right\vert =\left\vert \det\left(
e^{tA}\right)  \right\vert \left\vert v_{\mathbf{k}-1}\right\vert .$ For case
$2,$%
\[
J_{\xi_{%
\mathbb{C}
}}\left(  t,w\right)  =\left[
\begin{array}
[c]{cccc}%
\lambda_{1}e^{t\lambda_{1}}x &  &  & \\
\ast & e^{t\lambda_{2}} &  & \\
\vdots & \ast & \ddots & \\
\ast & \cdots & \ast & e^{t\lambda_{n}}%
\end{array}
\right]  ,
\]
$\left\vert \det\left(  J_{\xi_{%
\mathbb{C}
}}\left(  t,w\right)  \right)  \right\vert =\left\vert \det\left(
e^{tA}\right)  \right\vert \left\vert \lambda_{1}\right\vert ,$ and finally
for case $3$ we obtain that $\left\vert \det\left(  J_{\xi_{%
\mathbb{C}
}}\left(  t,w\right)  \right)  \right\vert =\left\vert \det\left(
e^{tA}\right)  \right\vert \left\vert v_{\mathbf{k}-1}+iv_{\mathbf{k}%
}\right\vert $ where $\operatorname{Re}\left(  v_{\mathbf{k}}\overline
{v_{\mathbf{k}-1}}\right)  =0.$
\end{proof}

Applying Lemma \ref{LemmaM}, and Lemma 12.2.2 in \cite{Comp}, there exists a
positive function $\mathbf{f}$ defined on the cross-section $\Sigma$ such
that
\begin{equation}
\left\vert \det\left(  J_{\xi}\left(  t,w\right)  \right)  \right\vert
=\left\vert \det\left(  e^{tA}\right)  \right\vert \mathbf{f}\left(  w\right)
. \label{J}%
\end{equation}

\begin{lemma}
\label{Lemma above}Suppose that $G\neq\left\{  1_{n\times n}\right\}  ,$ and
that $G$ acts freely in $\Omega\subset%
\mathbb{R}
^{n}.$ If $\phi\in L^{p}\left(
\mathbb{R}
^{n}\right)  $ then $\left\Vert \phi\right\Vert _{L^{p}\left(
\mathbb{R}
^{n}\right)  }=\left\Vert \mathbf{D}\phi\right\Vert _{L^{p}\left(
\mathbb{R}
\times\Sigma\right)  }$
\end{lemma}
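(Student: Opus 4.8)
The plan is to recognize this identity as a disguised change-of-variables formula, where the factor $\left\vert \det\left(J_{\xi}\right)\right\vert^{1/p}$ built into the definition of $\mathbf{D}$ in (\ref{D}) is precisely what is needed to absorb the Jacobian once everything is raised to the $p$-th power. So the whole statement should reduce to the standard substitution theorem applied to the diffeomorphism $\xi:\mathbb{R}\times\Sigma\rightarrow\Omega$.

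First I would reduce the left-hand norm from $\mathbb{R}^{n}$ to $\Omega$. Since Lemma \ref{cross-section} guarantees that $\Omega$ is co-null with respect to $\mathbf{m}$, its complement in $\mathbb{R}^{n}$ is a null set, and therefore $\left\Vert \phi\right\Vert_{L^{p}\left(\mathbb{R}^{n}\right)}=\left\Vert \phi\right\Vert_{L^{p}\left(\Omega\right)}$ for every $\phi\in L^{p}\left(\mathbb{R}^{n}\right)$. This lets me work entirely on $\Omega$, which is exactly the image of the diffeomorphism $\xi$.

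Next I would compute the right-hand side directly. By the definition of $\mathbf{D}$ in (\ref{D}), raising the modulus to the $p$-th power turns the exponent $1/p$ on the Jacobian into $1$, so that
\[
\left\vert \mathbf{D}\phi\left(w,t\right)\right\vert^{p}=\left\vert \phi\left(\xi\left(t,w\right)\right)\right\vert^{p}\left\vert \det\left(J_{\xi}\left(t,w\right)\right)\right\vert.
\]
Integrating over $\mathbb{R}\times\Sigma$ and invoking the change-of-variables theorem for the diffeomorphism $\xi\left(t,w\right)=e^{tA}w$ onto $\Omega$ gives
\[
\int_{\mathbb{R}\times\Sigma}\left\vert \phi\left(\xi\left(t,w\right)\right)\right\vert^{p}\left\vert \det\left(J_{\xi}\left(t,w\right)\right)\right\vert\,dt\,dw=\int_{\Omega}\left\vert \phi\left(x\right)\right\vert^{p}\,dx.
\]
Combining these displays shows $\left\Vert \mathbf{D}\phi\right\Vert_{L^{p}\left(\mathbb{R}\times\Sigma\right)}^{p}=\left\Vert \phi\right\Vert_{L^{p}\left(\Omega\right)}^{p}=\left\Vert \phi\right\Vert_{L^{p}\left(\mathbb{R}^{n}\right)}^{p}$, and taking $p$-th roots yields the claim.

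The only points requiring care, rather than a genuine obstacle, are the hypotheses of the substitution theorem: I must know that $\xi$ is a smooth bijection with a measurable Jacobian factor that is finite and nonvanishing almost everywhere. The bijectivity and smoothness come for free because $\xi$ is asserted to be a diffeomorphism onto $\Omega$, and the explicit factorization $\left\vert \det\left(J_{\xi}\left(t,w\right)\right)\right\vert=\left\vert \det\left(e^{tA}\right)\right\vert\mathbf{f}\left(w\right)$ from (\ref{J}), with $\mathbf{f}$ positive on $\Sigma$, confirms the Jacobian is positive almost everywhere so that no set of positive measure is collapsed. I expect this step to be routine; the substance of the lemma is simply that the $1/p$ normalization in $\mathbf{D}$ was chosen precisely to make $\mathbf{D}$ an isometry.
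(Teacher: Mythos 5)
Your proposal is correct and follows essentially the same route as the paper: both reduce the $L^{p}$ norm over $\mathbb{R}^{n}$ to an integral over the co-null set $\Omega$ and then apply the change-of-variables formula for the diffeomorphism $\xi$, with the exponent $1/p$ in the definition of $\mathbf{D}$ absorbing the Jacobian after raising to the $p$-th power. Your added remarks on the positivity of the Jacobian factor are a harmless extra check that the paper leaves implicit.
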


\begin{proof}
Clearly, we have that $\left\Vert \phi\right\Vert _{L^{p}\left(
\mathbb{R}
^{n}\right)  }^{p}=\int_{\Omega}\left\vert \phi\left(  u\right)  \right\vert
^{p}du$ and
\begin{align*}
\left\Vert \phi\right\Vert _{L^{p}\left(
\mathbb{R}
^{n}\right)  }^{p}  &  =\int_{\Sigma}\int_{%
\mathbb{R}
}\left\vert \phi\left(  \xi\left(  t,w\right)  \right)  \right\vert
^{p}\left\vert \det\left(  J_{\xi}\left(  t,w\right)  \right)  \right\vert
dtdw\\
&  =\int_{\Sigma}\int_{%
\mathbb{R}
}\left\vert \phi\left(  \xi\left(  t,w\right)  \right)  \left\vert \det\left(
J_{\xi}\left(  t,w\right)  \right)  \right\vert ^{1/p}\right\vert ^{p}dtdw\\
&  =\int_{\Sigma}\int_{%
\mathbb{R}
}\left\vert \mathbf{D}\phi\left(  w,t\right)  \right\vert ^{p}dtdw\\
&  =\left\Vert \mathbf{D}\phi\right\Vert _{L^{p}\left(
\mathbb{R}
\times\Sigma\right)  }.
\end{align*}
\end{proof}

\begin{lemma}
\label{Lemma1}Assume that $G\neq\left\{  1_{n\times n}\right\}  ,$ that $G$
acts freely in $\Omega\subset%
\mathbb{R}
^{n},$ and $\det\left(  e^{A}\right)  =1.$ If $\phi\in$ $L^{p}\left(
\mathbb{R}
^{n}\right)  $ then $\mathbf{m}\left(  \Lambda_{\phi}\right)  >0.$
\end{lemma}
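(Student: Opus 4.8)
The plan is to use the hypothesis $\det(e^A)=1$ to render the Jacobian factor in (\ref{J}) independent of $t$, and then extract orbit-wise $L^p$ integrability from the global finiteness of $\|\phi\|_{L^p(\mathbb{R}^n)}$ via Tonelli's theorem. First I would note that $\det(e^{tA})=e^{t\,\mathrm{tr}(A)}$, so $\det(e^A)=1$ forces $\mathrm{tr}(A)=0$ and hence $|\det(e^{tA})|=1$ for every $t\in\mathbb{R}$. Feeding this into (\ref{J}) gives $|\det(J_\xi(t,w))|=\mathbf{f}(w)$, a strictly positive quantity depending on $w$ alone.

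Next I would invoke Lemma \ref{Lemma above} and the change of variables along the cross-section. Since $\phi\in L^p(\mathbb{R}^n)$,
\[
\infty>\left\Vert \phi\right\Vert _{L^{p}\left(\mathbb{R}^{n}\right)}^{p}=\int_{\Sigma}\mathbf{f}\left(w\right)\left(\int_{\mathbb{R}}\left\vert \phi\left(e^{tA}w\right)\right\vert ^{p}\,dt\right)dw.
\]
Because the outer integral is finite and the integrand is nonnegative, Tonelli's theorem shows that the inner integral is finite for almost every $w\in\Sigma$; as $\mathbf{f}(w)>0$, this yields $\int_{\mathbb{R}}|\phi(e^{tA}w)|^{p}\,dt<\infty$. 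Recalling that $(\phi_{w}\circ\varphi_{w}^{-1})(t)=\phi(e^{tA}w)$, I conclude that $\phi_{w}\circ\varphi_{w}^{-1}\in L^{p}(\mathbb{R})$ for almost every $w\in\Sigma$.

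The remaining task is to promote this almost-everywhere statement on the $(n-1)$-dimensional set $\Sigma$ to a full-measure statement on $\Omega$. Set $E=\{v\in\Omega:\phi_{v}\circ\varphi_{v}^{-1}\in L^{p}(\mathbb{R})\}$. Replacing $v$ by $e^{sA}v$ only translates $\phi_{v}\circ\varphi_{v}^{-1}$ by $s$ (a direct computation using $\varphi_{e^{sA}v}=\varphi_{v}-s$), and $L^{p}(\mathbb{R})$ is translation invariant, so $E$ is $G$-invariant; in particular a whole orbit lies in $E$ as soon as its unique point of $\Sigma$ does. The previous step says $\Sigma\setminus E$ is $dw$-null, hence $\mathbb{R}\times(\Sigma\setminus E)$ is null in $\mathbb{R}\times\Sigma$, and applying the diffeomorphism $\xi$—which is locally Lipschitz and therefore preserves null sets—shows $\Omega\setminus E=\xi\left(\mathbb{R}\times(\Sigma\setminus E)\right)$ is $\mathbf{m}$-null. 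Thus $E$ is co-null in $\Omega$.

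Finally, comparing with the definition (\ref{Lambda}) of $\Lambda_{\phi}$, the $L^{p}(\mathbb{R})$ alternative gives $\Lambda_{\phi}\supseteq(\mathrm{supp}(\phi)\cap\Omega)\cap E$. Since $(\mathrm{supp}(\phi)\cap\Omega)\setminus E\subseteq\Omega\setminus E$ is null while $\mathbf{m}(\mathrm{supp}(\phi)\cap\Omega)>0$, I obtain $\mathbf{m}(\Lambda_{\phi})\geq\mathbf{m}(\mathrm{supp}(\phi)\cap\Omega)>0$, as claimed. I expect the main obstacle to be the transfer in the third paragraph: one must justify both the $G$-invariance of $E$ (via translation invariance of the target class) and the fact that a null subset of the lower-dimensional cross-section sweeps out an $\mathbf{m}$-null subset of $\Omega$ under $\xi$. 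Everything upstream of that is a clean application of the traceless hypothesis and Tonelli.
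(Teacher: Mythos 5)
Your proof is correct and follows essentially the same route as the paper: both hinge on the disintegration $\left\Vert \phi\right\Vert _{L^{p}\left(\mathbb{R}^{n}\right)}^{p}=\int_{\Sigma}\mathbf{f}\left(w\right)\left\Vert \phi_{w}\circ\varphi_{w}^{-1}\right\Vert _{L^{p}\left(\mathbb{R}\right)}^{p}dw$ made possible by the $t$-independence of the Jacobian when $\mathrm{trace}\left(A\right)=0$, with the paper arguing by contradiction where you apply Tonelli directly. Your third paragraph (the $G$-invariance of $E$ and the transfer of null sets between $\Sigma$ and $\Omega$ via $\xi$) in fact supplies the justification for the step the paper merely ``observes,'' namely that $\mathbf{m}\left(\Lambda_{\phi}\right)=0$ forces $\mu\left(\Sigma\cap\Lambda_{\phi}\right)=0$.
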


\begin{proof}
If $\det e^{A}=1$ then $t\mapsto\det e^{tA}$ is constant on $%
\mathbb{R}
.$ Now suppose by contradiction that $\phi\in$ $L^{p}\left(
\mathbb{R}
^{n}\right)  $ and $\mathbf{m}\left(  \Lambda_{\phi}\right)  =0.$ Let $\mu$ be
the Lebesgue measure on $\Sigma.$ Let $\Sigma_{\phi}=\Sigma\cap\Lambda_{\phi
}.$ We observe that if $\mathbf{m}\left(  \Lambda_{\phi}\right)  =0$ then
$\mu\left(  \Sigma_{\phi}\right)  =0.$ Applying Lemma \ref{Lemma above}, and
the fact that $\phi\left(  \xi\left(  t,w\right)  \right)  =\phi_{w}\left(
\varphi_{w}^{-1}\left(  t\right)  \right)  ,$
\begin{align*}
\left\Vert \phi\right\Vert _{L^{p}\left(
\mathbb{R}
^{n}\right)  }^{p}  &  =\int_{\Sigma}\int_{%
\mathbb{R}
}\left\vert \phi\left(  \xi\left(  t,w\right)  \right)  \right\vert
^{p}\left\vert \mathbf{f}\left(  w\right)  \right\vert dtdw\\
&  =\int_{\Sigma}\left\vert \mathbf{f}\left(  w\right)  \right\vert \left(
\left\Vert \phi_{w}\circ\varphi_{w}^{-1}\right\Vert _{L^{p}\left(
\mathbb{R}
\right)  }^{p}\right)  dw.
\end{align*}
However,
\begin{align*}
\left\Vert \phi\right\Vert _{L^{p}\left(
\mathbb{R}
^{n}\right)  }^{p}  &  =\left(  \int_{\Sigma_{\phi}}\left\vert \mathbf{f}%
\left(  w\right)  \right\vert \left(  \left\Vert \phi_{w}\circ\varphi_{w}%
^{-1}\right\Vert _{L^{p}\left(
\mathbb{R}
\right)  }^{p}\right)  dw\right)  +\\
&  \left(  \int_{\Sigma-\Sigma_{\phi}}\left\vert \mathbf{f}\left(  w\right)
\right\vert \left(  \left\Vert \phi_{w}\circ\varphi_{w}^{-1}\right\Vert
_{L^{p}\left(
\mathbb{R}
\right)  }^{p}\right)  dw\right)  .
\end{align*}
Since $\Sigma_{\phi}$ is a $\mu$-null set.
\[
\left\Vert \phi\right\Vert _{L^{p}\left(
\mathbb{R}
^{n}\right)  }^{p}=\int_{\Sigma-\Sigma_{\phi}}\left\vert \mathbf{f}\left(
w\right)  \right\vert \left(  \left\Vert \phi_{w}\circ\varphi_{w}%
^{-1}\right\Vert _{L^{p}\left(
\mathbb{R}
\right)  }^{p}\right)  dw.
\]
Thus $\left\Vert \phi\right\Vert _{L^{p}\left(
\mathbb{R}
^{n}\right)  }^{p}$ is either undefined or infinite. That would be a contradiction.
\end{proof}

Recall that $\det\left(  e^{A}\right)  =\exp\left(  \mathrm{trace}\left(
A\right)  \right)  .$

\begin{proposition}
Assume that $G\neq\left\{  1_{n\times n}\right\}  ,$ and that $G$ acts freely
in $\Omega\subset%
\mathbb{R}
^{n}.$ If $\mathrm{trace}\left(  A\right)  =0$ then equation $\sum_{k=1}%
^{m}c_{k}\phi\left(  e^{t_{k}A}\cdot\right)  =0$ has no non-trivial solution
in $L^{p}\left(
\mathbb{R}
^{n}\right)  .$
\end{proposition}

\begin{proof}
The proof follows from Lemma \ref{Lemma1}.
\end{proof}

\begin{example}
Coming back to Example \ref{example rotations},
\[
\exp\left(  tA\right)  \left[
\begin{array}
[c]{c}%
v_{1}\\
v_{2}\\
v_{3}\\
v_{4}%
\end{array}
\right]  =\left[
\begin{array}
[c]{c}%
v_{1}\cos t+v_{2}\sin t\\
v_{2}\cos t-v_{1}\sin t\\
\left(  v_{3}+tv_{1}\right)  \cos t+\left(  tv_{2}+v_{4}\right)  \sin t\\
\left(  v_{4}+tv_{2}\right)  \cos t-\left(  v_{3}+tv_{1}\right)  \sin t
\end{array}
\right]  .
\]
Notice that $A$ is traceless. Thus equation $\sum_{k=1}^{m}c_{k}\phi\left(
e^{t_{k}A}\cdot\right)  =0$ has no non-trivial solution in $L^{p}\left(
\mathbb{R}
^{4}\right)  .$
\end{example}

\end{document}